\newtheorem{theorem}{Theorem}[section]
\newtheorem{lemma}[theorem]{Lemma}
\newtheorem{corollary}[theorem]{Corollary}
\newtheorem{proposition}[theorem]{Proposition}
\newtheorem{notation}[theorem]{Notation}
\theoremstyle{definition}
\newtheorem{definition}[theorem]{Definition}
\newtheorem{problem}{Problem}
\newtheorem{remark}[theorem]{Remark}
\newtheorem{example}[theorem]{Example}
\def\C{{\mathbb C}}
\def\P{{\mathbb P}}
\def\Q{{\mathbb Q}}
\def\R{{\mathbb R}}
\def\Z{{\mathbb Z}}
\def\cD{{\mathcal D}}
\def\cO{{\mathcal{O}}}
\def\Q{{\mathbb{Q}}}
\def\operatorname#1{\mathop{\rm #1}\nolimits}
\def\Pic{\operatorname{Pic}}
\def\deg{\operatorname{deg}}
\def\Nef{{\operatorname{Nef}}}
\def\Amp{{\operatorname{Amp}}}
\def\NU{{\operatorname{N^1}}}
\newcommand{\pb}{\ar@{}[dr]|(.50){\text{\pigpenfont J}}}
\newcommand*\wthelper[2]{%
        \hbox{\dimen@\accentfontxheight#1%
                \accentfontxheight#11.15\dimen@
                $\m@th#1\widetilde{#2}$%
                \accentfontxheight#1\dimen@
        }%
}
\newcommand*\accentfontxheight[1]{%
        \fontdimen5\ifx#1\displaystyle
                \textfont
        \else\ifx#1\textstyle
                \textfont
        \else\ifx#1\scriptstyle
                \scriptfont
        \else
                \scriptscriptfont
        \fi\fi\fi3
}
\begin{document}

\title[Discriminants of Toric Varieties]{Discriminants of Toric Varieties}

\author[R. Mu\~noz]{Roberto Mu\~noz}
\address{Departamento de Matem\'atica Aplicada, ESCET, Universidad
Rey Juan Carlos, 28933-M\'ostoles, Madrid, Spain}
\email{roberto.munoz@urjc.es}
\thanks{Partially supported by the spanish government project MTM2015-65968-P}
%
\author[A. Nolla]{\'Alvaro Nolla}
\address{Departamento de Did\'acticas Espec\'ificas, Facultad de Formaci\'on del Profesorado y Educaci\'on, Universidad Aut\'onoma de Madrid, 28049-Madrid, Spain} \thanks{}
\email{alvaro.nolla@uam.es}

\subjclass[2010]{Primary 14M25, 14N05}

\begin{abstract} We study the subvariety of singular sections, the discriminant, of a base point free linear system $|L|$ on a smooth toric variety $X$. On one hand we describe pairs $(X,L)$ for which the discriminant is of low dimension. Precisely, we collect some bounds on this dimension and classify those pairs whose dimension differs the bound less than or equal to two. On the other hand we study the degree of the discriminant for some relevant families on polarized toric varieties, describing their minimal values and the region of the ample cone where this minimal is attained. 
\end{abstract}

\maketitle

\section{Introduction}\label{sec:intro}
Let $X$ be a smooth irreducible complex projective variety of dimension $n$ and $L$ a line bundle on $X$. The discriminant $\mathcal{D}(X,L)$ (just $\mathcal{D}$ when $X$ and $L$ are clear from the context) of the corresponding linear system $|L|$ is defined as the subvariety of the projective space $|L|$ of its singular elements: 
$$
\cD(X,L)=\{H \in |L|:\; H \hbox{ is singular}\} \subseteq |L|
$$

When $L$ is very ample the discriminant is irreducible and is classically known as the dual variety of the linearly normal embedded variety $X \subseteq |L|^\vee$. When $|L|$ is just base point free, by Bertini Theorem, the discriminant $\cD \subset |L|$ is a subvariety of $|L|$ of positive codimension, not necessarily irreducible. Two main invariants of  $\cD \subseteq |L|$ have been classically studied: codimension and degree, especially in the case $L$ very ample, see \cite{Tev} and references therein, but also in more general contexts, like $L$ ample and base point free, \cite{LPS0} or just base point free \cite{LM1}.

The main ingredient in the study of degree and codimension of discriminant varieties is the set of Chern classes of the so called first jet bundle $J_1(L)$ of $L$, which encodes some first order infinitesimal information of the linear system. The vanishing of its Chern class of order $n$ is related to the fact that the codimension of the discriminant is bigger than one, and the degree with respect to $L$ of the maximal order non-zero Chern class is related to the degree of its discriminant (see Lemma \ref{lem:chern}).

When $X$ is a toric variety, that is, when it contains a Zariski open subset isomorphic to a torus $\C^{*n}$ such that the action of the torus extends to an action on $X$, the presence of the generalized Euler sequence (see (\ref{eq:euler}) in Section \ref{sec:toric}) leads to explicit formulae for the Chern classes of the first jet bundle of a line bundle $L$. In particular, one can use these formulae to study when the Chern classes vanish or to bound their degree in order to face questions of classification of pairs $(X,L)$, $X$ toric, $L$ globally generated line bundle, such that the dimension of the discriminant is small, or its degree is low. 

In relation to the first question, the classification of $(X,L)$ whose discriminant has small dimension, when $L$ is very ample  Di Rocco has shown in \cite{dirocco} that if the discriminant is not a hypersurface then $X$ is a decomposable (that is, the associated vector bundle is a sum of line bundles) projective bundle over a smooth toric variety. Our first goal in this paper is to study this question when $L$ is just globally generated, see Problem \ref{problem}. The main idea in Di Rocco's proof is to use the particular geometry of $X \subset |L|^\vee$, which is fibered in toric varieties of Picard number one, finally projective spaces. The case in which $L$ is just globally generated is different because the differential of the map $\phi_L$ defined by $L$ is not necessarily of maximal rank at any point, so that not all the first order infinitesimal information of the linear system $|L|$ can be studied in the image of $X$ by $\phi_L$.  In particular, the discriminant could be reducible. On one hand, the dual variety of $\phi_L(X) \subseteq |L|^\vee$ is contained in the discriminant of $(X,L)$ and, on the other hand, some extra singular sections in $|L|$ can appear due to droppings in the rank of the differential of $\phi_L$. 

Putting together this information, certain bounds on the dimension of the discriminant can be settled, see Lemma \ref{lemma:lm1summary}. In fact, the codimension of $\cD$ in $|L|$ is smaller than or equal to $\dim X+1$ and to $\dim |L|+1$, and we deal with the question of classifying pairs $(X,L)$ reaching the bound, and the two subsequent cases. The description of the geometry of $\phi_L(X)$, see \cite{FI}, and the particular behaviour of linear sytems on toric varieties allow us to precisely describe pairs $(X,L)$, $X$ smooth toric variety, $L$ globally generated such that $k:=\dim |L|-1-\dim \cD$ is equal to $n,n-1,n-2$ or $\dim |L|, \dim |L|-1, \dim |L|-2$ (see Lemma \ref{lema:bound}, Proposition \ref{prop:notbig},  Theorem \ref{thm:big} and Proposition \ref{prop:N-2}). If the dimension of $|L|$  is not too small, mainly toric fibrations or scrolls appear, resembling, in some sense, the behaviour of the very ample case in these extremal cases.

In relation to the second question, the classification of pairs $(X,L)$ whose discriminant has low degree, there exist classical results for $L$ very ample, see \cite[Thm.~5.2]{zak}, of classification up to degree three.  When toric, they are the projective space, some products of projective spaces, or some rank one projective bundles over projective spaces, with particular choices of $L$ (see Theorem \ref{thm:codegree}). It sounds natural to provide explicit formulae for the degree of the discriminant of this kind of toric varieties, looking for examples of degree four and for general bounds for the degree of their discriminants.  In Section \ref{sec:examples} we produce explicit formulae for some Chern classes of the first jet bundle of these, as well as for other relevant examples. It is of interest to remark that, when fixing $X$, we can consider the degrees of the Chern classes of order $n$ of the first jet bundle of the different line bundles over $X$ as an integer function whose domain is the set of integer points of the nef cone of $X$. We have shown, see Proposition \ref{prop:mindeg}, that when restricting to the ample cone, this function is lower bounded and we give the explicit region where to find the minimal value. This fact allows us to read the explicit formulae of the particular examples showing the cases in which the degree of the discriminant is $4$  (see Propositions \ref{prop:rankone} and \ref{prop:rank2}). As a further illustration, we used the software {\em Macaulay2} \cite{M2} to study in detail two families of examples: rank one projective bundles over Hirzebruch surfaces and smooth Fano threefolds, presenting the lowest values of the degrees of their dual varieties (see Example \ref{ex:lbhirz}, Propositions \ref{prop:hirz} and \ref{prop:fano} and Figure \ref{fig:fano3}).

\medskip
\noindent {\bf{Acknowledgments.}} We would like to thank Javier Pello who suggested us a proof of formula (\ref{eq:products}).

\section{Preliminaries}\label{sec:prel}

\subsection{Generalities}

Let $X$ be an irreducible  smooth  projective variety of dimension $n$ and $L$ a line bundle on $X$. Along the paper the base field will be $\C$. As said in the introduction, the discriminant $\mathcal{D}(X,L)$ of the corresponding linear system $|L|$ is defined as the subvariety of the projective space $|L|$ of its singular elements. For $x \in X$ we will denote by $|L-x|$ the sublinear system of elements in $|L|$ passing through $x$ and by $|L-2x|$ those which are singular at $x$. Let us define the {\em discriminant defect} $k(X,L)$ (just $k$ when $X$ and $L$ are clear from the context) as $$k(X,L)=\dim|L|-1-\dim\cD(X,L),$$ which, as commented before, is non-negative when $L$ is base point free, by Bertini Theorem.

The possible positivity $k>0$ is reflected in the vanishing of some Chern class of the so called first jet bundle $J_1(L)$ of $(X,L)$, see \cite[Lem.~03]{LM1}. In fact, when $k>0$, the codimension of the discriminant is bigger than one so that it is possible to choose a line $\ell$ in $|L|$ not meeting $\cD$. The first jets of the elements of this line provide an exact sequence of this type:
$$0 \to \mathcal{O}_X^{\oplus 2} \to J_1(L) \to Q \to 0,$$ where $Q$ is a vector bundle (here the smoothness of any element in $\ell$ is used) of rank equal to $n-1$. This proves that $c_{n}(J_1(L))=0$.

If on the contrary $\cD$ is a hypersurface, then any line $\ell \subset |L|$ is meeting $\cD$. Suppose that for any $H \in \ell \cap \cD$ its singular locus is zero dimensional, then the degeneration locus of the map $\mathcal{O}_X^{\oplus 2} \to J_1(L)$ (a zero dimensional scheme) defined as before is representing the Chern class $c_{n}(J_1(L))$, \cite[Cor.~2.6]{LPS0}. We will identify these zero cycles representing the $n$-th Chern classes with their degrees. This, see \cite{LMjpaa}, implies that in this case $c_n(J_1(L))$ is the sum of the Milnor numbers $\mu_H(x)$ of the different singular points $x$ of the divisors $H \in \ell \cap \cD$.

We can summarize these known results in the following:

\begin{lemma}\label{lem:chern} Let $X$ be an irreducible smooth projective variety of dimension $n$ and $L$ a base point free line bundle on $X$. Denote by $J_1(L)$ the first jet bundle of $(X,L)$.
\begin{itemize}
\item[(i)] If $k>0$ then $c_{n}(J_1(L))=0$.
\item[(ii)] If $k=0$ and for a general line $\ell \subset |L|$ the singular locus of any $H \in \ell \cap \cD$ is zero dimensional then $c_{n}(J_1(L))=\Sigma \mu_H (x) >0$, the sum running along the singular points $x$ of the sections $H \in \ell \cap \cD$  and $\mu_H(x)$ standing for the Milnor number of $x$.
\end{itemize}
\end{lemma}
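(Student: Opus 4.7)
The plan is to realize $c_n(J_1(L))$ as the top Chern class of the cokernel of the evaluation map of first jets along a general pencil, and then exploit a dimension count plus Porteous' formula. Both halves of the lemma share the same setup, so I would begin by fixing a general line $\ell \subset |L|$ corresponding to a two dimensional subspace $V \subset H^0(X,L)$ and considering the canonical map
$$
\varphi \colon \mathcal{O}_X^{\oplus 2} \;\lra\; J_1(L), \qquad s \tensor 1 \longmapsto j_1(s),
$$
recalling that $j_1(s)(x)=0$ if and only if $s$ is singular at $x$. Note also that $\rank J_1(L)=n+1$, so whenever $\varphi$ is everywhere injective the cokernel $Q$ is locally free of rank $n-1$.

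For part (i), since $k>0$ the codimension of $\cD$ in $|L|$ is at least two, so a general $\ell$ misses $\cD$ entirely; equivalently, no section in $V\setminus\{0\}$ is singular at any point of $X$. Consequently $\varphi$ has full rank at every $x \in X$, so the sequence
$$
\shse{\mathcal{O}_X^{\oplus 2}}{J_1(L)}{Q}
$$
is exact with $Q$ locally free of rank $n-1$. Total Chern classes multiply, $c(J_1(L)) = c(Q)$, and $c_n(Q)=0$ for dimensional reasons, giving $c_n(J_1(L))=0$.

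For part (ii), $\cD$ is a hypersurface, so a general line $\ell$ meets $\cD$ in finitely many points; the degeneration locus $D_1(\varphi)=\{x \in X : \rk \varphi(x) \le 1\}$ is exactly the set of points at which some nonzero $s \in V$ is singular, i.e.\ the union of the singular loci of the sections $H \in \ell \cap \cD$. By hypothesis this locus is zero dimensional. This matches the expected codimension $(2-1)(n+1-1)=n$ for a rank drop of $\varphi$, so Porteous' formula applies and represents $[D_1(\varphi)]$ by the determinantal class $c_n(J_1(L)-\mathcal{O}_X^{\oplus 2})=c_n(J_1(L))$.

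The remaining step is to identify the local scheme structure of $D_1(\varphi)$ at a singular point $x$ of some $H \in \ell \cap \cD$ with the Milnor number $\mu_H(x)$. At such $x$ one may choose a generator $s_0 \in V$ defining $H$ and a second generator $s_1 \in V$ not vanishing at $x$; after trivializing $L$ near $x$, the ideal cut out locally by the $2\times 2$ minors of $\varphi$ is the Jacobian ideal of the local equation of $H$, whose colength is precisely $\mu_H(x)$. Summing over all singular points of sections in $\ell \cap \cD$ gives $c_n(J_1(L))=\sum \mu_H(x)>0$. The main subtlety I expect is the last identification: one must verify that the choice of pencil is sufficiently general that each contribution is exactly a Milnor number rather than a higher multiplicity (this is where the citation to \cite{LMjpaa} will be used), and that the zero dimensionality assumption ensures no embedded or positive dimensional components contribute spuriously.
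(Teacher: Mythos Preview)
Your proposal is correct and follows essentially the same approach as the paper: both use the jet evaluation map $\cO_X^{\oplus 2}\to J_1(L)$ attached to a general pencil, observe for (i) that missing $\cD$ makes the cokernel locally free of rank $n-1$, and for (ii) identify the degeneracy locus with $c_n(J_1(L))$ and its local length with the Milnor numbers. The only difference is cosmetic: where the paper cites \cite[Cor.~2.6]{LPS0} and \cite{LMjpaa} for the degeneracy class computation and the Milnor number identification, you spell these out via Porteous' formula and the Jacobian ideal description.
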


In particular, when $|L|$ is very ample and the dual variety $X^\vee \subset |L|$ is a hypersurface, since the general singularity of an element of $\cD$ is shown to be ordinary quadratic, then $c_n(J_1(L))$ is just the degree of the dual variety $X^\vee \subset |L|$. When the dual variety is not a hypersurface, $c_n(J_1(L))=0$. Moreover, in this last case, the defect $k=\dim |L|-1-\dim(X^\vee)$ is positive and the singular locus of the hyperplane section corresponding to any smooth point $H \in X^\vee$  is shown to be (also when $X \subset \P^N$ is singular, where the dual is defined as the closure of hyperplanes tangent to $X$ at smooth points of $X$) a linear space of dimension $k$, see \cite[Thm.~1.18]{Tev}, called the {\it contact locus} of $X$ and $H$. Hence, through a general point of $X$, there exists a linear space of dimension $k$ contained in $X \subset |L|^\vee$. Such property of $X \subset |L|^\vee$ is usually said as being {\it ruled in linear spaces of dimension $k$}. Then, considering a general $\P^k$ in $|L|$, it is meeting the dual variety in a finite set of points, whose singular locus for any of them is a linear space. This allow to compute the degree of the dual variety in the following way:

\begin{remark}(\cite[Rmk.~1.6.11]{BSadjunction})\label{rem:degdual} For $X$ irreducible smooth projective variety, $L$ a very ample line bundle providing an embedding $X \subset|L|^\vee$, the degree of the dual variety $X^\vee \subset |L|$ is $c_{n-k}(J_1(L))L^{k}$.
\end{remark}

As said before, any projective variety $X \subset \P^N$ whose dual is not a hypersurface is ruled in $\P^k$'s. Let us recall the following definition:

\begin{definition}\label{def:scroll} A smooth projective embedded variety $X \subset \P^N$ is called a {\it scroll in} $\P^{n-m}$'s if $X$ admits a smooth morphism $\pi:X \to Z$ onto a smooth projective variety such that for all $z \in Z$, the fiber $\pi^{-1}(z) \subset \P^N$ is a linear $\P^{n-m} \subset \P^N$. 
\end{definition}

Observe that if $X=\P_Z(E)$ where $Z$ is a smooth variety and $E$ is a very ample vector bundle of rank $n-m$, then the embedding of $X$ in $\P^N$ is a scroll in $\P^{n-m}$'s. Scrolls are a source of examples of positive defect varieties. For further use we remark the following:

\begin{remark}\cite[Thm.~7.14]{Tev},\cite{LS}\label{ex:scrolls} If $X  \subset \P^N$ is a scroll in $\P^{n-m}$'s then $\dim X^\vee =N-1-k$ with $k \geq n-2m$.
\end{remark}

 
The first jet vector bundle of $(X,L)$ lies, by definition, in the following exact sequence ($\Omega_X$ stands for the dual of the tangent bundle to $X$):
\begin{equation}\label{eq:jetsequence}0 \to \Omega_X(L) \to J_1(L) \to \cO_X(L) \to 0
\end{equation}
 so that its top Chern class is:
\begin{equation}\label{eq:formula} c_n(J_1(L))=\sum_{k=0}^n (n+1-k)c_k(\Omega_X)L^{n-k}
\end{equation}

\begin{remark}\label{rmk:additivity} Consider $M$ another line bundle on $X$. The exact sequence (\ref{eq:jetsequence}) can be, on one hand, applied to $L+M$ and, on the other, twisted by $\cO_X(M)$ to get these two exact sequences:
$$0 \to \Omega_X(L+M) \to J_1(L+M) \to \cO_X(L+M) \to 0$$ and
$$0 \to \Omega_X(L+M) \to J_1(L)\otimes \cO_X(M)  \to \cO_X(L+M) \to 0$$

The Chern classes of the middle terms are equal:
\begin{equation}\label{eq:additivity}c_i(J_1(L+M))=c_i(J_1(L) \otimes \cO_X(M))=\sum_{t=0}^i {n+1-t \choose i-t}c_t(J_1(L))M^{i-t}\end{equation}
\end{remark}

The following proposition will be useful to study the behaviour of $c_n(J_1(L))$ when $L$ is moving in the nef cone of $X$.

\begin{proposition}\label{prop:crece} Let $X$ be a smooth projective variety of dimension $n$, $L$ and $M$ two line bundles on $X$. Suppose that $J_1(L)$ is nef (this occurs, for instance, when $L$ is very ample) and $M$ is globally generated. Then 
$$c_n(J_1(L+M)) \geq c_n(J_1(L))$$
\end{proposition}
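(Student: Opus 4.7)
The plan is to expand $c_n(J_1(L+M))$ using the additivity formula (\ref{eq:additivity}), separate out the term that reproduces $c_n(J_1(L))$, and then show that the remaining contributions are non-negative by invoking the nef hypothesis on $J_1(L)$ and on $M$.

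First, I would specialize (\ref{eq:additivity}) to $i=n$. Since $\binom{n+1-t}{n-t}=n+1-t$, the identity reads
\begin{equation*}
c_n(J_1(L+M))\;=\;\sum_{t=0}^{n}(n+1-t)\,c_t(J_1(L))\,M^{n-t}.
\end{equation*}
The term with $t=n$ equals $c_n(J_1(L))$ (since $M^0=1$ and the coefficient is $1$), so after isolating it
\begin{equation*}
c_n(J_1(L+M))\;-\;c_n(J_1(L))\;=\;\sum_{t=0}^{n-1}(n+1-t)\,c_t(J_1(L))\,M^{n-t},
\end{equation*}
and it suffices to prove that each summand on the right is non-negative.

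For this, note that $M$ globally generated implies $M$ is nef, and consequently $M^{n-t}$ is represented by a nef cycle of codimension $n-t$ (indeed, any $n-t$ general members of $|M|$ intersect in an effective cycle of that codimension). The hypothesis that $J_1(L)$ is nef then allows us to apply the Fulton--Lazarsfeld positivity theorem for Chern classes of nef bundles: every Chern class $c_t(J_1(L))$ pairs non-negatively with any nef class of complementary dimension. In particular $c_t(J_1(L))\cdot M^{n-t}\ge 0$ for each $0\le t\le n-1$, and the coefficients $n+1-t$ are positive integers, so the sum is non-negative and the desired inequality follows.

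The parenthetical remark in the statement is justified by the standard fact that very ampleness of $L$ is equivalent to the surjectivity of the evaluation map $H^0(X,L)\otimes \cO_X\to J_1(L)$; hence in the very ample case $J_1(L)$ is globally generated and therefore nef. The genuinely non-trivial ingredient in the argument is the Fulton--Lazarsfeld positivity; the rest is a direct bookkeeping with (\ref{eq:additivity}).
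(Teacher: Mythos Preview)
Your proof is correct and follows essentially the same route as the paper: both expand via the additivity formula (\ref{eq:additivity}), isolate the $t=n$ term, and reduce to showing $c_t(J_1(L))\cdot M^{n-t}\ge 0$ for $t<n$ by representing $M^{n-t}$ as an effective cycle (using global generation) and then invoking positivity of Chern classes of nef bundles \cite[Thm.~8.2.1]{laz}. One small terminological caveat: the Fulton--Lazarsfeld theorem asserts non-negativity of $c_t$ of a nef bundle on irreducible subvarieties (hence on effective cycles), not on arbitrary ``nef classes'' of higher codimension; your parenthetical about general members of $|M|$ intersecting in an effective cycle is what actually makes the argument work, and the paper makes this step explicit via a short induction.
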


\begin{proof} In view of Remark \ref{rmk:additivity} it is left to prove that 
$$
c_k(J_1(L))M^{n-k}\geq 0  \hbox { for any } n-1 \geq k \geq 0
$$
Since $M$ is globally generated we claim that $M^{n-k}$ can be written as a finite sum with non-negative coefficients of 
irreducible subvarieties of codimension $n-k$ in $X$. This will prove the theorem because the intersection of the Chern classes of a nef vector bundle with irreducible varieties is non-negative, see \cite[Thm.~8.2.1]{laz}. 

The claim is obiously true when $n-k=1$. Now, if the property holds for $i \geq 1$ then $M^i =\sum a_j Z_j$, $a_j \geq 0$ and $Z_j$ irreducible of codimension $i$. Since $M$ is globally generated one can choose an element $H \in |M|$ not containing any of the $Z_j$'s so that $M^{i+1}=\sum a_j (Z_j \cap H)$, as requested. 
\end{proof}

%
%


\subsection{Toric varieties}\label{sec:toric}

Let us focus on toric varieties.

\begin{notation}\label{not:toric} {\rm Assume from now on that $X$ is a smooth projective toric variety associated to a fan $\Sigma_X$, whose subset of one dimensional cones is $\Sigma(1)=\{\rho_1, \dots, \rho_m\}$ and denote by $D_i$ the divisor corresponding to $\rho_i$. Consider $|L|$ a base point free linear system on $X$. Any divisor $D \in |L|$ is linearly equivalent to a non-negative integer combination of the divisors $D_i$, see \cite[(6.4.10)]{cox}, that is, $D=\sum_i a_i D_i ,$ $a_i\geq 0$.}
\end{notation}

The generalized Euler sequence of toric varieties
\begin{equation}\label{eq:euler}0 \to \Omega_X \to \oplus_{i=1}^m \cO(-D_i) \to \cO^{m-n} \to 0\end{equation}
is showing that the total Chern class $c(\Omega_X)$ is just computed by the $D_i$'s, that is 
\begin{equation}\label{eq:chernomega}
c(\Omega_X)=\prod_{i=1}^m(1-D_i)
\end{equation}
This can be plugged in (\ref{eq:formula}) to compute the top Chern class of $J_1(L)$. 

We would like to face the problem of classification of discriminant defective toric varieties, extending the results of classification of embedded toric varieties whose dual is not a hypersurface, see \cite{dirocco}, in the following context:

\begin{problem}\label{problem} Classify $(X,L)$ such that $X$ is an irreducible smooth toric variety of dimension $n$, $L$ is a base point free line bundle on $X$ of $\dim |L|=N$, and $k=N-1-\dim(\cD)>0$, where $\cD=\cD(X,L)$ stands for the discriminant of $(X,L)$. 
\end{problem}

Moreover these varieties form a (strict) subset of those for which $c_n(J_1(L))$ vanishes. The question of its classification appears.

\begin{problem}\label{problem2} Classify $(X,L)$ as in Problem \ref{problem} such that $c_n(J_1(L))=0$.
\end{problem}

The classification of \cite{dirocco} shows that when $L$ is very ample, the only toric varieties $X \subset |L| ^\vee$ whose dual is not a hypersurface are decomposable projective bundles over a smooth toric varieties $Y$, i.e., $X =\P_{Y}(L_0 \oplus \dots \oplus L_{m})$, for $n>2m$, embedded as scrolls in $\P^m$'s. Formulae of $c_n(J_1(L))$ with $L$ nef for projective bundles like these when $Y=\P^{n-m}$ are given in Example \ref{ex:projectivebundles}.

As said before, to deal with these problems we need to understand, on one hand the behaviour of the morphism $\phi_L$, and on the other hand the geometry of the image $\phi_L(X) \subseteq |L|^\vee$.
Let us first describe how the map $\phi_L$ looks like, cf. \cite[Sect.~6.2]{cox}.
The decomposition of $D \in |L|$ as a linear combination of the $D_i$'s produces, via its Cartier Data, a lattice polytope ${P_D}$ and finally a normal generalized fan $\Sigma_{P_D}$. Then, see \cite[Thm.~6.2.8]{cox}, the fan $\Sigma$ defining $X$ is a refinement of $\Sigma_{P_D}$ and thus it induces a proper toric morphism $\phi:X \to X_{\Sigma_{P_D}}$, such that $L=\phi^*(L_{P_D})$ , where
$ X_{\Sigma_{P_D}}$ is the normal toric variety associated to $\Sigma_{P_D}$ and $L_{P_D}$ is the ample line bundle 
associated to $P_D$. The fibers of this morphism are connected, being  the associated map between their lattices surjective, cf. \cite[Prop.~2.1]{cataldo}, and then $H^0(X,L) \simeq H^0(X_{\Sigma_{P_D}}, L_{P_D})$. This is saying that $\phi_L: X \to {\P}^N$ factors through $\phi$ and the corresponding finite morphism $\phi_{L_{P_D}}$ as:
\begin{equation}\label{eq:factors}
X \stackrel{\phi}{\rightarrow}  X_{\Sigma_{P_D}} \stackrel{\phi_{L_{P_D}}}{\rightarrow} {\P}^N
\end{equation}

Consider now the dual variety $\phi_L(X)^\vee \subset |L|$, which is contained in $\cD$.  Therefore, when the defect is positive, the geometry of $\phi_L(X)$ is special, because its dual variety is not a hypersurface. For instance, it is ruled in linear spaces. It will be very useful the description of its geometry provided in \cite[Thm.~1.4]{FI} which we recall in the last statement of the following lemma. Let us summarize all these facts:

\begin{lemma}\label{lemma:phi} Let $(X,L)$ be as in Problem \ref{problem} and $\phi_L:X \to \phi_L(X) \subset |L|^\vee$ the morphism defined by the linear system $|L|$. In these conditions:
\begin{itemize}
\item[(i)] $\phi_L$ factors as described in {\rm(\ref{eq:factors})};
\item[(ii)] $\dim(\phi_L(X)^\vee)=N-1-k'$, with $k'>0$ and $k' \geq k$;
\item[(iii)] there exists a torus equivariant dominant rational map $\varphi:\phi_L(X) \to (\mathbb{C}^*)^c$ such that the closure of each fiber is projectively equivalent to the join of $r+1$ non-defective toric varieties $X_i \subset \P^{N_i}$, $0 \leq i \leq r$ such that $\P^{N_i} \cap \P^{N_j }=\emptyset$ when $i \ne j$, and $k'=r-c$. 
\end{itemize}
\end{lemma}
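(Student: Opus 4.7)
The plan is to assemble the three parts essentially from material already developed in the excerpt, together with one external citation for part (iii).

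For (i), I would take a specific $D=\sum_i a_i D_i\in|L|$ (as in Notation~\ref{not:toric}) and read off its Cartier data to produce the lattice polytope $P_D$ and the associated normal fan $\Sigma_{P_D}$. By \cite[Thm.~6.2.8]{cox} the fan $\Sigma_X$ refines $\Sigma_{P_D}$, so the identity on the common lattice induces a proper toric morphism $\phi:X\to X_{\Sigma_{P_D}}$ with $L=\phi^{*}L_{P_D}$. The induced lattice map is surjective, so by \cite[Prop.~2.1]{cataldo} the fibres of $\phi$ are connected; pushing forward gives $\phi_{*}\cO_X=\cO_{X_{\Sigma_{P_D}}}$ and hence $H^0(X,L)\cong H^0(X_{\Sigma_{P_D}},L_{P_D})$. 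This identification of global sections is exactly what is needed to factor the morphism defined by $|L|$ as $\phi_L=\phi_{L_{P_D}}\circ\phi$, giving (\ref{eq:factors}).

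For (ii), I would use the observation (recalled just before the lemma) that a hyperplane in $|L|^{\vee}$ which is tangent to $\phi_L(X)$ at a smooth point pulls back, via $\phi_L$, to a singular element of $|L|$; hence $\phi_L(X)^{\vee}\subseteq\cD$. Since the hypothesis of Problem~\ref{problem} is $k>0$, we have $\dim\cD=N-1-k<N-1$, and therefore the irreducible variety $\phi_L(X)^{\vee}$ has dimension at most $N-1-k$. Defining $k':=N-1-\dim\phi_L(X)^{\vee}$ gives $k'\geq k>0$ as required.

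For (iii), I would invoke the structure theorem \cite[Thm.~1.4]{FI} applied to the (possibly non-normal, but toric) projective variety $\phi_L(X)\subset|L|^{\vee}$, which is dual-defective by part (ii). That theorem produces a torus equivariant dominant rational map $\varphi:\phi_L(X)\dashrightarrow(\C^{*})^{c}$ whose general fibre closures are projectively equivalent to joins of $r+1$ mutually skew non-defective toric varieties $X_i\subset\P^{N_i}$, and it also records the numerical relation $k'=r-c$ between the join parameters and the defect. I would then simply translate the statement of that theorem into our notation, noting that the skewness condition $\P^{N_i}\cap\P^{N_j}=\emptyset$ for $i\neq j$ is precisely the one ensuring that the join has the expected dimension.

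The only genuine content beyond bookkeeping is the inclusion $\phi_L(X)^{\vee}\subseteq\cD$ needed in (ii); the rest is either directly contained in the preceding discussion (for (i)) or is a verbatim specialisation of an external result (for (iii)). I do not anticipate a main obstacle.
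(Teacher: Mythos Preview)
Your proposal is correct and matches the paper's approach exactly: the lemma is presented there as a summary (``Let us summarize all these facts''), with (i) drawn from the preceding paragraph on the factorization via \cite[Thm.~6.2.8]{cox} and \cite[Prop.~2.1]{cataldo}, (ii) from the observation that $\phi_L(X)^\vee\subseteq\cD$, and (iii) as a direct restatement of \cite[Thm.~1.4]{FI}. There is nothing to add.
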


Besides the stated problems, it is also a natural question to classify embedded projective toric varieties whose dual variety is of low degree, trying to extend in this context the results of \cite[Thm.~5.2]{zak}, where the cases of degree of the dual variety less than or equal to three are considered. 

Since $X$ is smooth and toric, linear equivalence
and numerical equivalence for divisors coincide,  so that the real vector space $\NU(X)$ of $\R$-divisors modulo numerical equivalence coincides with $\Pic(X) \otimes \R$, and the  
Picard group of $X$ is its lattice of integer
points. Descriptions of the nef and ample cone, denoted $\Nef(X)$ and $\Amp(X)$, of a smooth projective toric variety $X$ are known, and one can denote $\Nef(X)_{\mathbb Z} = \Nef(X) \cap \Pic(X)$ and $\Amp(X)_{\mathbb Z} = \Amp(X) \cap \Pic(X)$. 
Proposition \ref{prop:crece} allows us to study the function 
\begin{equation}\label{eq:cn} \begin{array}{l}
jc_n: \Nef(X)_{\mathbb Z} \longrightarrow \Z \\
\phantom{jc_n:} \;\; L \mapsto jc_n(L)=c_n(J_1(L))
\end{array} \end{equation}
sending any nef divisor $L$ to the Chern class  $c_n(J_1(L))$ of its first jet bundle. Recall, see \cite[Thm.~6.3.20]{cox}, that the nef cone $\Nef(X)$ is rational polyhedral. This implies
the existence of a set of {\it minimal generators}, that is, nef divisors $L_1, \dots, L_m$ such that each nef divisor is a rational linear combination of the $L_i$'s with non-negative coefficients, and  each $L_i$ is primitive and spans an edge of the cone.

\begin{proposition}\label{prop:mindeg} Let $X$ be an irreducible smooth projective toric variety of dimension $n$ and let $L_1, \dots, L_m$ be a minimal generators of the nef cone $\Nef(X)$. Then the  restriction $$jc_n|_{\Amp(X)_{\mathbb Z}}: \Amp(X)_\Z\to \Z$$ of the function $jc_n$ defined above achieves its minimal value in the finite set: 
$$A=\{\sum_{i=1}^m \lambda_i L_i:  0 \leq \lambda_i \leq 1\} \cap \Amp(X)_{\Z}$$
\end{proposition}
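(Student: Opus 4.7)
The strategy is to reduce the minimization to the finite set $A$ using the monotonicity established in Proposition~\ref{prop:crece}. First, $A$ is finite because it is the intersection of the compact polytope $P=\{\sum_{i=1}^{m}\lambda_{i}L_{i}:0\le\lambda_{i}\le1\}$ with the discrete lattice $\Pic(X)$. Since $A\subseteq \Amp(X)_{\Z}$, it then suffices to exhibit, for every $L\in\Amp(X)_{\Z}$, an element $L_{0}\in A$ with $jc_n(L_{0})\le jc_n(L)$; this forces $\min_{\Amp(X)_{\Z}}jc_n=\min_{A}jc_n$, and the minimum is consequently attained on the finite set $A$.

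I would construct $L_{0}$ by an iterative subtraction procedure. The preliminary fact I need is that every ample divisor admits a \emph{strictly positive} decomposition $L=\sum_{i=1}^{m}a_{i}L_{i}$ with $a_{i}>0$; this is the convex-geometric statement that the interior of the polyhedral cone generated by the $L_{i}$ coincides with the set of strictly positive combinations of the generators, proved by a separating hyperplane (Farkas) argument. Given such a decomposition, if $a_{i}\le1$ for every $i$ then $L\in P\cap\Amp(X)_{\Z}=A$ and I set $L_{0}=L$. Otherwise there is some $j_{0}$ with $a_{j_{0}}>1$; set $L':=L-L_{j_{0}}=(a_{j_{0}}-1)L_{j_{0}}+\sum_{i\ne j_{0}}a_{i}L_{i}$, which is again integer and strictly positively decomposed, hence $L'\in\Amp(X)_{\Z}$. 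Using that on a smooth projective toric variety ample coincides with very ample and nef coincides with globally generated, the jet bundle $J_{1}(L')$ is nef and $L_{j_{0}}$ is globally generated, so Proposition~\ref{prop:crece} gives
\[
jc_n(L)=c_{n}(J_{1}(L'+L_{j_{0}}))\ge c_{n}(J_{1}(L'))=jc_n(L').
\]
Replacing $L$ by $L'$ and repeating, the integer-valued potential $\sum_{i}\lfloor a_{i}\rfloor$ drops by exactly $1$ at each step (only $a_{j_{0}}$ changes, and $\lfloor a_{j_{0}}-1\rfloor=\lfloor a_{j_{0}}\rfloor-1$), while remaining non-negative; hence the procedure terminates after finitely many steps at an $L_{0}$ whose decomposition has every coefficient in $(0,1]$, which is therefore in $A$, and the chain of inequalities yields $jc_n(L)\ge jc_n(L_{0})$.

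The main obstacle I expect is the preliminary lemma that every ample divisor has a strictly positive decomposition in terms of the minimal generators of $\Nef(X)$. In the simplicial case (when the $L_{i}$ are linearly independent) the unique decomposition makes this immediate, but in the non-simplicial case there are linear relations among the $L_{i}$ and the decomposition of a given $L$ is not unique, so one must appeal to convex duality to guarantee the existence of some strictly positive expression. Once that lemma is in place, strict positivity is preserved under the subtraction because $a_{j_{0}}>1$ becomes $a_{j_{0}}-1>0$, and the remaining steps---the invocation of Proposition~\ref{prop:crece} and the toric identifications ample $=$ very ample and nef $=$ globally generated---are standard.
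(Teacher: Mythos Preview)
Your proof is correct and rests on the same two ingredients as the paper's: the monotonicity from Proposition~\ref{prop:crece} and the toric identifications ample $=$ very ample, nef $=$ globally generated. The only difference is in the combinatorial reduction to $A$. The paper starts from an arbitrary non-negative rational expression $L=\sum p_iL_i$, subtracts all floors at once to reach $E=\sum(p_i-\lfloor p_i\rfloor)L_i$, and if $E$ is nef but not ample picks a curve $C$ with $E\cdot C=0$; since each summand is non-negative this forces the coefficient of any $L_i$ with $L_i\cdot C>0$ to vanish in $E$, while ampleness of $L$ guarantees such an $L_i$ with $\lfloor p_i\rfloor\ge 1$, so one can move that $L_i$ from the floor part into $E$ without leaving the unit cube, iterating until $E$ becomes ample. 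You instead secure a strictly positive decomposition up front (your lemma is correct: for $L$ in the interior of the full-dimensional cone, $L-\epsilon\sum_i L_i$ is still nef for small $\epsilon>0$, whence $L=\sum_i(c_i+\epsilon)L_i$ with all coefficients positive) and then peel off one generator at a time, never leaving $\Amp(X)_\Z$. Your route trades the paper's curve argument for this convex-geometry fact; both are short, and your iteration is arguably cleaner since ampleness is preserved at every step.
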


%

\begin{proof}
As a consequence of the equivalence between ampleness and very ampleness of divisors on smooth projective toric varieties (cf. \cite[Thm.~6.1.15]{cox}) any divisor $L$ in $\Amp(X)_\Z$ is in fact very ample, so that $J_1(L)$ is globally generated and hence nef. Moreover,  the equivalence on divisors between nefness and spannedness by global sections (see \cite[Thm.~6.3.12]{cox}) says that any divisor in $\Nef(X)_\Z$ is if fact globally generated. Consider $L \in \Amp(X)_\Z \setminus A$, that can be written as $$L=p_1L_1+\dots +p_mL_m$$ with $p_i \in \Q$, $p_i \geq 0$ for $1 \leq i\leq m$. Now write 
$$L=\left \lfloor p_1 \right \rfloor L_1+\dots+\left \lfloor p_m \right \rfloor L_m +E$$
If $E$ is ample, then $E \in A$ and we conclude by Proposition \ref{prop:crece} that $c_n(J_1(L)) \geq c_n(J_1(E))$.  If $E$ is nef but not ample then there exists a curve $C$ such that $EC=0$. Hence, $L_iC>0$ implies that the coefficient of $L_i$ in $E$ is $0$. But, since $L$ is ample there exists $1 \leq i \leq m$ such that $L_iC>0$ and $\left \lfloor p_i \right \rfloor \ne0$, otherwise $LC=0$. Hence, we can substitute $E$ by $E+L_i$ and repeat the process in case $E+L_i$ is nef but not ample, leading finally to a choice of $E$ ample. 
\end{proof}

Proposition \ref{prop:crece} reads particularly simple in the following situation:

\begin{corollary}\label{cor:mindeg} In the conditions above, if there exists an ample divisor $L$ on $X$ such that any ample divisor $L'$ can be written as $L'=L+D$ with $D$ a nef divisor, then $c_n(J_1(L)) \leq c_n(J_1(L'))$, that is, the minimal value of $jc_n$ is reached at $L$.
\end{corollary}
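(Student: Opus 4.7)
The plan is to observe that the corollary is an immediate consequence of Proposition \ref{prop:crece}, once we verify that its two positivity hypotheses are satisfied in the smooth projective toric setting. The only content of the argument is to translate ampleness and nefness on $X$ into the stronger positivity properties that Proposition \ref{prop:crece} requires.

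First, I would note that since $X$ is smooth, projective and toric, ampleness and very ampleness coincide for divisors (cf. \cite[Thm.~6.1.15]{cox}), exactly as used in the proof of Proposition \ref{prop:mindeg}. In particular the hypothesis that $L$ is ample implies $L$ is very ample, and therefore $J_1(L)$ is globally generated (the evaluation map $H^0(X,L)\otimes\cO_X\to J_1(L)$ is surjective precisely because $|L|$ separates $1$-jets); in particular $J_1(L)$ is nef. This matches the first hypothesis of Proposition \ref{prop:crece}.

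Next, by the hypothesis of the corollary, any ample divisor $L'$ can be written as $L'=L+D$ for some nef divisor $D$. Again using that $X$ is smooth projective toric, any nef divisor is globally generated (cf. \cite[Thm.~6.3.12]{cox}), so $D$ itself is globally generated. Both hypotheses of Proposition \ref{prop:crece} are now in force with $M=D$, and that proposition yields
\[
c_n(J_1(L'))=c_n(J_1(L+D))\geq c_n(J_1(L)).
\]
Since $L'$ was an arbitrary ample divisor, this says exactly that the restriction of $jc_n$ to $\Amp(X)_\Z$ attains its minimum at $L$.

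I expect no real obstacle here: the whole content is the two toric facts (ample $=$ very ample and nef $=$ globally generated) needed to activate Proposition \ref{prop:crece}. The only minor point to be careful about is the implication "very ample $\Rightarrow$ $J_1(L)$ nef", which is already invoked parenthetically in the statement of Proposition \ref{prop:crece}, so I would simply cite that proposition for it rather than reprove the surjectivity of the jet evaluation map.
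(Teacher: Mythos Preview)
Your proof is correct and matches the paper's approach: the paper states this as an immediate corollary of Proposition~\ref{prop:crece} (introduced with ``Proposition~\ref{prop:crece} reads particularly simple in the following situation'') and gives no separate proof. The two toric facts you invoke (ample $=$ very ample and nef $=$ globally generated) are exactly those used in the proof of Proposition~\ref{prop:mindeg}, so you are filling in precisely what the paper leaves implicit.
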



%
%
%
In relation to our problems of classification of positive defect varieties or varieties whose dual has low degree, one can study simultaneously all different line bundles over a chosen $X$. 

\begin{corollary}\label{cor:mindeg2} In the conditions of Proposition \ref{prop:mindeg}, if $$\min\{c_n(J_1(L)): L \in A\} \neq 0$$ and it is obtained in, say, $L_{min}$, then for any embedding $X \subset \P^N$, the following holds:
\begin{enumerate} 
\item[(i)] the dual variety $X^\vee \subset\P^{N\vee}$ is a hypersurface, and
\item[(ii)] the degree of the dual variety $\deg(X^\vee)$  is greater than or equal to the degree of  the dual variety corresponding to the linearly normal embedding $X \subset |L_{min}|^\vee$.
\end{enumerate}
\end{corollary}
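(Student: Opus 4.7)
The plan is to fix any embedding $X \subset \P^N$, set $L := \cO_X(1)$, and view the embedding as arising from a base-point-free linear subspace $V \subseteq H^0(X,L)$, so that $\P^{N\vee}$ is identified with $\P(V) \subset |L|$. Both statements will then follow from understanding the full discriminant $\cD(X,L) \subset |L|$.

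Since $L$ is very ample, it lies in $\Amp(X)_{\Z}$, and Proposition~\ref{prop:mindeg} gives
$$
c_n(J_1(L)) \;\geq\; c_n(J_1(L_{min})) \;>\; 0,
$$
the strict positivity coming from the hypothesis of the corollary. The contrapositive of Lemma~\ref{lem:chern}(i) then forces $k(X,L) = 0$, so that $\cD(X,L)$ is a hypersurface in $|L|$; the same reasoning applied to $L_{min}$ shows that $\cD(X,L_{min})$ is a hypersurface in $|L_{min}|$. By Remark~\ref{rem:degdual} applied to the two linearly normal embeddings,
$$
\deg \cD(X,L) = c_n(J_1(L)), \qquad \deg \cD(X,L_{min}) = c_n(J_1(L_{min})).
$$

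Next, I would identify $X^\vee \subset \P^{N\vee}$ set-theoretically with $\cD(X,L) \cap \P(V)$: a hyperplane $H \in \P^{N\vee}$ corresponds to a section $s \in V$ (up to scalar), and $H$ is tangent to $X \subset \P^N$ at a smooth point precisely when $\{s=0\} = H \cap X$ is singular, that is, when $s$ represents an element of $\cD(X,L)$. Base-point-freeness of $V$ together with Bertini yields a smooth general divisor in $|V|$, so $\P(V) \not\subseteq \cD(X,L)$ and the intersection is proper. Intersecting the irreducible hypersurface $\cD(X,L)$ of degree $c_n(J_1(L))$ with the linear subspace $\P(V)$ then produces a hypersurface in $\P(V)$ of the same degree, proving (i) and giving
$$
\deg X^\vee \;=\; c_n(J_1(L)) \;\geq\; c_n(J_1(L_{min})) \;=\; \deg \cD(X,L_{min}),
$$
which is (ii).

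The delicate point, which I expect to be the main technical obstacle, is the last degree equality $\deg X^\vee = c_n(J_1(L))$ rather than merely a weak inequality: the reduced dual variety $X^\vee$ supports the scheme-theoretic intersection $\cD(X,L) \cap \P(V)$, and these have the same degree only when the intersection is generically reduced. This is guaranteed because the general singularity of a section in $\cD(X,L)$ is an ordinary quadratic point, so $\cD(X,L)$ is generically smooth along its image in $\P(V)$, and $\P(V)$ meets it transversally at a generic intersection point since $V$ defines a genuine embedding rather than a degenerate morphism.
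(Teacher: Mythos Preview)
Your proof is correct and follows essentially the same route as the paper's: both use Proposition~\ref{prop:mindeg} to obtain $c_n(J_1(L))\geq c_n(J_1(L_{min}))>0$ for the very ample $L=\cO_X(1)$, and then pass from the linearly normal dual in $|L|$ to the dual of the given embedding via a linear section. The only difference is that the paper simply invokes \cite[Thm.~1.21]{Tev} for the statement that the dual of a linear isomorphic projection is the corresponding linear section of $X^\vee$ (with matching degree), whereas you unpack this by identifying $X^\vee\subset\P(V)$ with $\cD(X,L)\cap\P(V)$ and arguing transversality directly---your transversality argument is right (base-point-freeness of $V$ forces $\P(V)\not\subset T_H X^\vee=|L-x_H|$ at a general smooth point $H$), though it could be stated a bit more crisply.
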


\begin{proof} The proof is just an application of Proposition \ref{prop:mindeg} and of the fact that the dual variety of a linear isomorphic projection of a projective variety $X \subset \P^N$ is the corresponding linear section of $X^\vee \subset \P^{N\vee}$, see \cite[Thm.~1.21]{Tev}.
\end{proof}

\section{A bunch of examples}\label{sec:examples}

The following examples are of interest:

\begin{example}[Projective space]\label{ex:projective} Let $X=\P^n$ be the projective space, easy computations lead to \begin{equation}\label{eq:proj} c_n(J_1(\cO(m)))=(n+1)(m-1)^n \end{equation}
It only vanishes when $m=1$, for which the discriminant is empty. 

\end{example}

\begin{example}[Projective bundles over projective spaces]\label{ex:projectivebundles} Let $\Sigma$ be the fan defining the projective bundle $X=\P_{\P^{n-m}}(\cO \oplus \cO(r_1) \oplus \dots \oplus \cO(r_m))$, where $0 \leq r_1\leq \dots\leq r_m$. Let $s=\min(\{m+1\}\cup\{i: r_i\neq 0\})$. The fan $\Sigma \subset \mathbb{R}^{n}=\R^{n-m} \times \R^m$ is defined by $\Sigma(1)=\{\rho_0, \dots, \rho_{n+1}\}$, where the $\rho_i$'s are the following rays (see for instance \cite[Ex.~7.3.5]{cox}): 
$$\begin{cases}
\rho_{0}= \langle ((1, \dots, 0), (0, \dots ,0)) \rangle \\
\dots \\
\rho_{n-m-1}= \langle ((0, \dots, 1), (0, \dots ,0)) \rangle \\
\rho_{n-m}= \langle((-1, \cdots, -1),(r_1, \dots,r_m)) \rangle \\ 
\rho_{n-m+1}=\langle((0, \dots, 0),(1 , \dots, 0))\rangle \\
\dots \\
\rho_{n}= \langle((0,\dots,0),(0, \dots, 1))\rangle\\
\rho_{n+1}=\langle((0, \dots, 0),(-1,\dots, -1))\rangle
\end{cases}
$$



Denoting by $D_i$ the divisor associated to $\rho_i$, the following relations in the Picard group appear:
$$
D_0= \dots =D_{n-m}, \quad
D_{n+1}=D_{n-m+j}+r_jD_0
$$
and the nef cone of $X$ is generated by $D_0$ and $D_{n+1}$. Moreover, any nef line bundle can be written as $L=aD_0+bD_{n+1}$ for $a,b$ non-negative integers. The Formula (\ref{eq:chernomega}) shows:
$$c_l(\Omega_X)=(-1)^l\sum_{0 \leq \i_1<\dots<\i_l\leq n+1}D_{i_1}\dots D_{i_l},$$
and in view of formula (\ref{eq:formula}) we get:
\begin{equation}\label{eq:projectivebundles}
c_n(J_1(L))=\sum _{l=1}^n (-1)^l (n+1-l)(aD_0+bD_{n+1})^{n-l}( \sum_{0 \leq \i_1<\dots<\i_l\leq n+1}D_{i_1}\dots D_{i_l})
\end{equation}

When $b=0$, since $D_0=\pi^*(\cO_{\mathbb{P}^{n-m}}(1))$, where $\pi:X \to \mathbb{P}^{n-m}$ is the morphism defining the projective bundle structure, then the morphism defined by $|L|=|aD_0|$ is just the composition of $\pi$ with the $a$-Veronese embedding of $\mathbb{P}^{n-m}$.

When $a=0$, $|L|=|bD_{n+1}|$ defines the $b$-Veronese embedding of the cone whose vertex is a linear $\P^{s-1}$ over the embedding defined by $\cO(1)$ of $\P(\cO(r_{s+1}) \oplus \dots \oplus \cO(r_m))$. 

When $L$ is ample, i.e. $a,b>0$, then it is very ample and the minimal value of $c_n(J_1(L))$ is reached when $a=b=1$ in formula (\ref{eq:projectivebundles}) by Corollary \ref{cor:mindeg}.
\end{example}

Let us consider in detail some particular cases.\medskip

\noindent$\bullet$ {\it Contraction to the base.} When $L=aD_0$, i.e. $b=0$, we get that 
$$c_n(J_1(aD_0))=\sum_{l=0}^n(-1)^l(n+1-l)(m+1)a^{n-l}{n-m+1 \choose l-m}=$$
$$=(-1)^m(m+1)(n-m+1)(a-1)^{n-m}$$  As $|L|=|D_0|$ gives the morphism defining the projective bundle structure, its discriminant is empty, and $c_n(J_1(D_0))=0$. For $L=aD_0$, $a>1$, the morphism defined by $|L|$ is just the composition of $\pi$ with the $a$-Veronese embedding of $\mathbb{P}^{n-m}$ so that $c_n(J_1(L))<0$  does not contradict Lemma \ref{lem:chern}, because the singular locus of any singular element in $|aD_0|$ is of positive dimension. Moreover, as Example \ref{ex:projective} shows, the degree of the dual variety of this Veronese variety $v_a(\P^{n-m}) \subset \P^M$ is $(n-m+1)(a-1)^{n-1}$, which in fact appears as a factor in $c_n(J_1(L))$.

These examples are showing that:

\begin{remark}\label{rem:reciprocalchern1} There exist pairs $(X,L)$ as in Lemma \ref{lem:chern} such that 
$c_n(J_1(L))<0$, so that the general singular element of $|L|$ is singular along a positive dimensional subvariety.
\end{remark}

\noindent$\bullet$ {\it Products.} In the particular case $r_1=\dots = r_m=0$ the variety $X=\P^{n-m}\times \P^m$ is just the product of two linear spaces. Since the only non-zero intersection is $D_0^{n-m}D_{n+1}^m=1$ one gets  $$c_n(J_1(L))=\sum_{l=0}^n(-1)^l(n+1-l)\sum_{j=0}^{n-l}{n-l \choose j}{n-m+1 \choose n-m-j}{m+1 \choose l-n+m+j}a^j b^{n-l-j}$$ 
 Without loss of generality we can assume $n-m \geq m$. Recall that for $a=1$, $b>0$ the linear system $L=D_0+bD_{n+1}$ is providing an embedding of $X$ as a scroll in $\P^{n-m}$'s, hence the defect is greater than or equal to $n-2m$ (see Remark \ref{ex:scrolls}). In particular $c_n(J_1(D_0+bD_{n+1}))=0$ when $n>2m$ and it is giving the degree of the discriminant when $n=2m$. In this last situation we get that the minimal value is reached at $b=1$:
\begin{equation}\label{eq:products}c_{2m}(J_1(D_0+D_{n+1}))=$$ $$=\sum_{l=0}^{2m}(-1)^l(2m+1-l)\sum_{j=m-l}^{2m-l}{2m-l \choose j}{m+1 \choose m-j}{m+1 \choose l+j-m}=m+1\end{equation} where the second equality can be proved by induction and the usual properties of the combinatorial numbers.

These examples show that any possible value of $c_n(J_1(L))>1$ can be reached.
\medskip

\noindent$\bullet$ {\it Rank one.} Let us consider the case $m=1$.
$$c_l(\Omega_X)=\big({n \choose l}-r_1{n \choose l-1}\big)D_0^l+2{n \choose l-1}D_0^{l-1}D_{n+1}$$
%
%
Hence, in order to use Formula (\ref{eq:formula}), we need to compute $L^{n-l}D_0^l$ for $0 \leq l \leq n-1$ and $L^{n-l}D_0^{l-1}D_{n+1}$ for $1 \leq l\leq n-1$. For the former we get:
$$L^{n-l}D_0^l=\begin{cases}
\frac{1}{r_1}((a+br_1)^{n-l}-a^{n-l}) \quad \hbox{if } r_1\ne 0 \\
(n-l)a^{n-l-1}b \quad \hbox{if } r_1=0 
\end{cases}$$ 
and for the latter:
$$
L^{n-l}D_0^{l-1} D_{n+1}=(a+br_1)^{n-l} 
$$

Finally, using the following equalities of combinatorial numbers:
$$(n+1-l){n \choose l}={n \choose l}+n{n-1 \choose l}, \; (n+1-l){n \choose l-1}=n{n-1 \choose l-1},$$ 
together with Formula (\ref{eq:formula}) one gets the following expressions for $c_n(J_1(L))$:

\noindent For $r_1=0$, where $X$ is just the product $\mathbb{P}^{n-1} \times \mathbb{P}^{1}$:
\begin{equation}\label{eq:rankoneproducts}
c_n(J_1(L))=n(a-1)^{n-2}(2(a-1)(b-1)+ab(n-1))
\end{equation}

\noindent For $r_1>0$:
\begin{equation}\label{eq:rankone}
c_n(J_1(L))=\frac{1}{r_1}\big((a-1+br_1)^{n-1}(a-1+br_1+an+br_1n-nr_1)-
\end{equation}$$
-(a-1)^{n-1}(a-1+an+nr_1)\big)
$$

-When  $a=0$,  $b>0$, and $r_1=0$ we get $c_n(J_1(L))=-2n(-1)^{n-2}(b-1) $, vanishing only when $b=1$. Recall that here the morphism defined by $|L|=|bD_{n+1}|$ is the projection to $\mathbb{P}^1$ composed with its $b$-Veronese embedding. 

-When $a=0$, $b>0$,  and $r_1>0$ the morphism defined by $|L|=|D_{n+1}|$ is the contraction of the minimal section, for which the image is a cone of vertex a point over a $r$-Veronese embedding of $\mathbb{P}^{n-1}$ and $|bD_{n+1}|$ is defining a Veronese embedding of this cone. Since any section of $|L|$ which corresponds to a hyperplane through the vertex is reducible (contains the minimal section) then $\mathcal{ D}(X,L)$ is a hypersurface and  Lemma \ref{lem:chern} does not apply. The formula for the Chern class is:
$$c_n(J_1(L))=\frac{1}{r_1}((-1+br_1)^n+(br_1-1)(b-1)nr_1-(-1)^{n-1}(-1+r_1n))$$ which can be shown to be bigger than 0 except at $r_1=3=n$, where it vanishes. 

This example is showing that one cannot expect to have a reciprocal of Lemma \ref{lem:chern}.

\begin{remark}\label{rem:reciprocalchern2} There exist pairs $(X,L)$ as in Lemma \ref{lem:chern} such that 
 $c_n(J_1(L))=0$ and $\cD \subset |L|$ is a hypersurface. 
\end{remark}


In the rest of the cases, i.\ e.,  $a>0$, $b>0$, the line bundle $L$ is very ample so that $c_n(J_1(L))$ has to be bigger than or equal to zero, being the degree of $X^\vee \subset |L|$ when it is a hypersurface and zero when its codimension is bigger than one. Moreover, in view of Corollary \ref{cor:mindeg} the minimal value of $c_n(J_1(L))$ is reached for $a=b=1$.

-When $r_1>0$, $a=b=1$ leads to $c_n(J_1(L))=r_1^{n-2}(r_1+n)$. The minimal value is $3$ when $n=2$, $r_1=1$: \begin{equation}\label{eq:blowup}c_2(J_1(D_0+D_3))=3 \end{equation}
This corresponds to the embedding in $\P^4$ of the blowup of the plane at a point. Moreover, the equalities $c_2(J_1(D_0+D_3))= r_1+2$ for $n=2$, and $c_n(J_1(D_0+D_{n+1}))=1+n$ for $r_1=1$ provide another family of examples reaching any possible value of $c_n(J_1(L))>1$.

-When $r_1=0$, $a=1$, $b>0$ and $n=2$,  we get $c_n(J_1(L))=2b$. In this case $c_n(J_1(L))$ is the degree of the dual variety of the Segre embedding of $\P^1 \times C_b$, being $C_b \subset \P^b$ the rational normal curve of degree $b$. For $n>2$ we get $c_n(J_1(L))=0$ because $|L|$ is defining an embedding of $\P^{n-1} \times \P^1 \subset |L|^\vee$  as a scroll in $\P^{n-1}$'s, for which the dual variety is not a hypersurface. In fact the degree of the dual variety is computed as $$c_2(J_1(L))L^{n-2}=bn$$  In the particular case $n=3$, $b=1$ one gets \begin{equation}\label{eq:segre} c_2(J_1(D_0+D_4))=3 \end{equation}

When $r_1=0$, $a>1$, $b>0$,  one can check that the minimal is reached for $n=2,a=2,b=1$, which is $4$, and for the rest of the cases one gets $c_n(J_1(L))>4$. 

%
%
%
%





\medskip

\noindent$\bullet$ {\it Rank two projective bundles of dimension three.} Let us consider the case $n=3$, $m=2$. We get that 
\begin{equation}\label{eq:ranktodim3}c_3(J_1(L))=2(b -1)(2(r_1+r_2)b (b-1/2)+3(a-1)(b-1)+3ab)
\end{equation}
Since the case $r_1=r_2=0$ has been considered previously, assume $r_2>0$.

-When $b =1$, we get $c_3(J_1(L))=0$. If $a=0=r_1$ then $|D_4|$ defines the contraction of the family of minimal sections to a cone of vertex a line over the $r_2$-Veronese embedding of $\P^1$, the discriminant here is not a hypersurface. If $a=0$ and $r_1,r_2>0$ then $|D_4|$ defines the contraction of the minimal section to a cone of vertex a point over a rational ruled surface (whose fibers are linearly embedded) and the discriminant is not a hypersurface. If $a \geq 1$ then $|L|$ is very ample and $X \subset |L|^\vee$  is a scroll in planes, hence the discriminant is not a hypersurface. In this case, the degree of the dual variety is given by 
\begin{equation}\label{eq:ranktodim3def}c_2(J_1(L)) L=3a+r_1+r_2
\end{equation}

-When $b >1$,  $a=0$, $c_3(J_1(L))=2(b-1)(2(r_1+r_2)b(b-1/2)-3(b-1))>0$.

-When $b >1$, $a>0$ the line bundle $L$ gives an embedding and $c_3(J_1(L))>0$ is computing the degree of the dual variety of the corresponding embedded $3$-fold. One can check that the minimal degree is computed in $b=2$, $a=1$, hence, the extremal case is $c_3(J_1(L))=2(6(r_1+r_2)+6)$, strict inequality in the rest of the cases.

\begin{example}\label{ex:blowup} If we consider the case $m=1$ in the previous example and refine the cone  by means of the ray $\rho_{n+2}=(1, 0, \dots,0, -1)$, the corresponding toric variety $X$ is the blow-up of the projective bundle in a hyperplane of the minimal section.  The line bundle $L=D_1$ defines a morphism which contracts the exceptional divisor and then projects the projective bundle onto the base. The discriminant $\cD(X,L)$ is then a point. Let us show the value of the top Chern class of its first jet bundle. Recall that we have now that $D_1= \dots =D_{n-1}=D_0+D_{n+2}$ and $D_n=D_{n+1}+D_{n+2}+r_1D_1$. The only non-zero intersection numbers involving $D_1$ are  the following: $$1=D_1^{n-1}D_{n+1}=D_1^{n-1}D_{n}=D_0D_1^{n-2}D_n=D_0D_1^{n-2}D_{n+1}=D_0D_1^{n-2}D_{n+2}$$ Hence $$(-1)^lc_l(\Omega_X)D_1^{n-l}=3{n-1 \choose l-2}+2{n-1 \choose l-1}$$ Plugging this in formula (\ref{eq:formula}) and using basic properties of combinatorial numbers one gets: $$c_n(J_1(L))=(n-1)(3 \sum_{k=1}^{n}(-1)^k {n-2 \choose k-2} +2\sum_{k=1}^{n}(-1)^k {n-2 \choose k-1})$$ which is one when $n=2$ and vanishes when $n>2$.
\end{example} 

\begin{example}[Rank one projective bundles over Hirzebruch Surfaces]\label{ex:lbhirz} Consider $X=\P_{\P^1}(\cO \oplus \cO(r))$ defined, as above,  by the fan with rays $\Sigma_X(1) = \{\rho'_0=(1,0), \rho'_1=(-1,r), \rho'_2=(0,1), \rho'_3=(0,-1)\}$.  The nef cone of $X$ is generated by $D'_0$ and $D'_3$, and so any decomposable rank one projective bundle $Y$ over $X$ can be expressed as either $\P_X(\cO\oplus\cO(sD'_0+tD'_3))$ with $s,t \geq 0$,  or $\P_X(\cO(sD'_0)\oplus\cO(tD'_3))$ for some $s,t>0$. 

In any case the toric variety has rays given by $$\Sigma_Y(1)= \{ \rho_0,\rho_1,\rho_2,\rho_3,\rho_4,\rho_5 \}$$ where
$$\begin{cases}
\rho_0=\begin{cases}
\langle(1,0,s)\rangle \; \; \: \hbox{  if $X=\P_X(\cO \oplus \cO(sD_0'+tD_3'))$, $s,t \geq 0$}\\
\langle(1,0,-s)\rangle \;  \hbox{  if $X=\P_X(\cO(sD_0') \oplus \cO(tD_3'))$, $s,t >0$} \end{cases}\\
\rho_1 = \langle(-1,r,0)\rangle, \\
\rho_2=\langle(0,1,0)\rangle,\\
\rho_3=\langle(0,-1,t)\rangle,\\
\rho_4=\langle(0,0,1)\rangle,\\
\rho_5=\langle(0,0,-1)\rangle \\
\end{cases}$$
which corresponds to the toric divisors $D_0$, $D_1$, $D_2$, $D_3$, $D_4$ and $D_5$. Moreover, any nef line bundle can be written in the form $L=aD_0+bD_3+cD_5$ in the case $Y_0= \P_X(\cO\oplus\cO(sD'_0+tD'_3))$, and $L=aD_0+bD_3+c(sD_0+D_5)$ in the case $Y_1= \P_X(\cO(sD'_0)\oplus\cO(tD'_3))$ for some integers $a,b,c\geq0$. 

In this setup we obtain the following result.

\begin{proposition}
Let $X=\P_{\P^1}(\cO\oplus\cO(r))$ be a Hirzebruch surface with $r\geq0$, let $Y$ be the projectivization of a decomposable rank two vector bundle on $X$ and let $L$ be a nef line bundle on $Y$. Then $c_3(J_1(L))=0$ if and only if either $L=D_0$, or $r=0$ and $L=D_3$, or $s=t=0$ and $L=D_5$; or $(Y,L)$ is one of the following cases:
\[
\begin{array}{lll}
X & Y & L \\[0.05cm]
\hline\\[-0.3cm]
(a) \;\P_{\P^1}(\cO\oplus\cO(r)) 
		& \P_X(\cO\oplus\cO(sD'_0+D'_3)),  ~s\geq0   & D_0+D_5 \\[0.1cm]
(b)		& \P_X(\cO(D'_0)\oplus\cO(D'_3)),    & D_0+D_5 \\[0.1cm]
\hline\\[-0.3cm]
(c)\; \P^1\times\P^1 & \P_X(\cO\oplus\cO(D'_0+tD'_3)), ~t\geq0	& D_3+D_5 \\[0.1cm]
(d)			& \P_X(\cO\oplus\cO(2D'_0+2D'_3))	& D_5 \\[0.1cm]	
\hline\\[-0.3cm]
(e) \: \P_{\P^1}(\cO\oplus\cO(1))	& X\times\P^1	 & D_3+D_5 \\[0.1cm]
(f)	& \P_X(\cO\oplus\cO(D'_0+2D'_3))	& D_5 \\[0.1cm]
(g)	& \P_X(\cO\oplus\cO(3D'_3))		& D_5 \\[0.1cm]
\hline\\[-0.3cm]
(h)\; \P_{\P^1}(\cO\oplus\cO(2))	& \P_X(\cO\oplus\cO(2D'_3))	& D_5\\[0.1cm]
\hline
\end{array}
\]

\end{proposition}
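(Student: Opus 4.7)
The plan is to compute $c_3(J_1(L))$ explicitly as a polynomial in the integer coefficients $(a,b,c)$ of $L$ in a basis of $\Pic(Y)$ and the parameters $(r,s,t)$, then classify its zeroes. Intersection theory on $Y$ reduces to the basis $\{D_0,D_3,D_5\}\subset\Pic(Y)\cong\Z^3$ via the three linear equivalences $D_1=D_0$, $D_2=D_3-rD_0$, and $D_4=D_5-sD_0-tD_3$ for $Y_0$ (with the sign of $s$ flipped for $Y_1$). Combined with the vanishing $D_0D_1=D_2D_3=D_4D_5=0$ of products of opposite rays and the toric identity $D_iD_jD_k=1$ for each of the eight maximal cones, these determine all degree-three intersection numbers; for $Y_0$ one obtains $D_0D_3D_5=1$, $D_3^2D_5=r$, $D_0D_5^2=t$, $D_3D_5^2=s+tr$, $D_5^3=2st+t^2r$, all remaining monomials vanishing. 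Substituting into formula~(\ref{eq:formula}) with $n=3$ and $c(\Omega_Y)=\prod_{i=0}^{5}(1-D_i)$ produces $c_3(J_1(L))$ as an explicit polynomial $F(a,b,c;r,s,t)$ of degree three in $(a,b,c)$.

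For $L$ in the interior of $\Nef(Y)$, I apply Proposition~\ref{prop:crece} to $L_{\min}=D_0+D_3+D_5$: since $L_{\min}$ is ample (hence very ample on a smooth toric variety) and therefore $J_1(L_{\min})$ is globally generated and nef, monotonicity gives $c_3(J_1(L))\geq c_3(J_1(L_{\min}))$ for every ample $L$. A direct computation yields $c_3(J_1(L_{\min}))=4+4r+4s+4t+5tr+2st+t^2r>0$ for all $r,s,t\geq 0$ on $Y_0$ (and likewise on $Y_1$), so no ample $L$ gives $c_3=0$.

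For $L$ on the boundary of $\Nef(Y)$ I enumerate by the face containing $L$. The three ``trivial'' vanishings $L=D_0$, $L=D_3$ (with $r=0$), and $L=D_5$ (with $s=t=0$) correspond to $\phi_L$ factoring through $\P^1$, so the discriminant is empty. For the non-trivial boundary cases each restricted polynomial factors: $F(0,0,1;r,s,t)=(t-1)(rt+2s)-2t$, with the four non-negative integer solutions $(r,s,t)=(0,2,2),(1,1,2),(2,0,2),(1,0,3)$ giving cases $(d),(f),(g),(h)$; $F(1,0,1;r,s,t)_{Y_0}=(t-1)(rt+2s+4)$, vanishing iff $t=1$ (case $(a)$); $F(0,1,1;r,s,t)_{Y_0}=r(t+1)(t+4)+2(t+2)(s-1)$, vanishing iff $(r,s)=(0,1)$ for any $t$ (case $(c)$) or $(r,s,t)=(1,0,0)$ (case $(e)$); and $F(1,0,1;r,s,t)_{Y_1}=(t-1)(rt-2s+4)$, where the nef-cone description of $Y_1$ (generated by $D_0,D_3,sD_0+D_5$) restricts $L=D_0+D_5$ to $s\leq 1$, forcing $s=1$ and hence $t=1$ (case $(b)$).

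The main obstacle is the Diophantine case analysis: verifying that each factored equation has exactly the listed non-negative integer solutions subject to the admissibility of $(r,s,t)$ and nefness of $L$. This is routine once the factorizations are in hand, using direct enumeration for small parameters and elementary growth bounds (the $t^2r$ term dominates for $t,r$ large) to rule out further solutions.
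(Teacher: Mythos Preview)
Your overall strategy coincides with the paper's: compute $c_3(J_1(L))$ as an explicit polynomial in $(a,b,c;r,s,t)$, dispose of the ample case by monotonicity from $L_{\min}=D_0+D_3+D_5$, then analyze the boundary of $\Nef(Y)$. Your intersection numbers and the value $c_3(J_1(L_{\min}))=rt^2+5rt+2st+4r+4s+4t+4$ match the paper, and your factorizations at $(a,b,c)=(0,0,1),(1,0,1),(0,1,1)$ are correct and cleaner than the paper's rearrangements.

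There is, however, a genuine gap in the boundary step. The boundary of $\Nef(Y)$ is not a finite set: each wall $\{a=0\}$, $\{b=0\}$, $\{c=0\}$ is two-dimensional and contains infinitely many lattice points, and each edge contains all multiples of its generator. You only evaluate $F$ at the specific points $(1,0,0)$, $(0,1,0)$, $(0,0,1)$, $(1,0,1)$, $(0,1,1)$, and never treat, e.g., $L=2D_0+D_5$, $L=D_0+2D_5$, $L=3D_3+2D_5$, or $L=aD_0$ with $a\geq 2$. The monotonicity of Proposition~\ref{prop:crece} is unavailable here, since it requires $J_1$ of the base divisor to be nef, which you only know for ample (hence very ample) $L$; on the walls $L$ is merely nef. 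The paper closes this gap by restricting the full polynomial $F(a,b,c;r,s,t)$ to each wall and showing directly that the restricted polynomial (still depending on two of $a,b,c$) can vanish only for $c\in\{0,1\}$ on $\{b=0\}$, only for $b,c\in\{0,1\}$ on $\{a=0\}$, and is strictly negative on $\{c=0,\,a,b>0\}$. You need an argument of this type for each wall and each edge; the check at a single lattice point per face does not suffice. The same omission affects your $Y_1$ analysis, where you examine only the single point $L=D_0+D_5$.
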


\begin{proof} The list of cases shown above is obtained by inspection of the explicit formulae of $c_3(J_1(L))$. First observe that in both cases, $\P_X(\cO\oplus\cO(sD'_1+tD'_2))$ and $\P_X(\cO(sD'_1)\oplus\cO(tD'_2))$, every nef divisor is a non-negative integer combination of the generators of the nef cone so by Corollary \ref{cor:mindeg} the minimal value for $c_3(J_1(L))$ for $L$ ample is obtained when $(a,b,c)=(1,1,1)$. 

Let us start with the case $Y_0=\P_X(\cO\oplus\cO(sD'_1+tD'_2))$, where we have that 
\begin{equation}\label{eq:projhirz}c_3(J_1(L)) = 4c^3rt^2 + 12bc^2rt + 8c^3st - 3c^2rt^2 + 12b^2cr + 12bc^2s + 12ac^2t -\end{equation}
$$- 6bcrt- 3c^2rt - 6c^2st + 24abc - 6b^2r - 6bcr - 6bcs - 6c^2s - 6act -$$
$$- 6c^2t  
+ 2crt - 12ab - 12ac - 12bc + 4br + 4cs + 4ct + 8a + 8b + 8c - 8$$

Then the minimum is $rt^2 + 5rt + 2st + 4r + 4s + 4t + 4$ which is always greater than zero. Then, it is left to check the vanishing of  $c_3(J_1(L))$ at the walls $a=0$, $b=0$ and $c=0$ of the nef cone.

If $a=0$ then $c_3(J_1(L)) = (3c^2rt^2 + 6bcrt + 6b^2r + 6bcs + 8)(c-1) + (3c^2rt + 6bcr + 6c^2s)(b-1) + c^2st(8c-6)+ c^2t(crt-6) + 3bc(crt-4) + 2crt + 4br + 4cs + 4ct + 8b$ which may only be less than or equal to zero for $c=0$, $c=1$ or $b=0$. If $c=0$ then $b=1$ and we obtain the case $r=0$ and $L=D_3$. If $c=1$ then the only possible value for $b$ is $1$, so $L=D_3+D_5$, and the values for $r$ are only $0$ or $1$. In the first one we obtain the case $(c)$ and in the second the case $(e)$ in the table. Finally, if $b=0$ then we must have $c=1$, so $L=D_5$. Now if $r,s,t>0$ then the only possibility happens to be $r=s=1$ and $t=2$ obtaining the case $(f)$ in the table. If $r=0$ but $s,t>0$ then $s=t=2$ so we get the case $(d)$. If now $s=0$ but $r,t>0$ then either $r=t=2$ getting the case $(h)$ or $r=1,t=3$ getting the case $(g)$. The remaining case for the vanishing of $c_3(J_1(L))$ is obtained when $s=t=0$ for any value of $r$. 

If $b=0$ (and $a>0$) then $c_3(J_1(L)) = (3c^2rt^2 + 6c^2st + 8)(c - 1) + (c^2rt + 2c^2s + 4ac)(ct - 3) + 2act(2c - 3) + 2c^2t(2a - 3) + 2crt + 4cs + 4ct + 8a$, so only for $c=0$ and $c=1$ we may obtain the value zero. In the first case we must have $a=1$, so $L=D_0$ for any value of $r$, $s$ and $t$ gives $c_3(J_1(L)) =0$. For the case $c=1$ we must have $t=1$ but there is no restriction on $r$ and $s$ so we obtain the case $ (a)$ in the table. 

Finally if $c=0$ (and $a,b>0$) then $c_3(J_1(L)) = - 2br(3b - 2) - 2a(3b - 4) - 2b(3a - 4) - 8$ which is always smaller than zero.

For the case $\P_X(\cO(sD'_1)\oplus\cO(tD'_2))$ we have that 
\begin{equation}\label{eq:projhirz2}c_3(J_1(L)) = 4c^3rt^2 + 12bc^2rt + 4c^3st - 3c^2rt^2 + 12b^2cr + 12bc^2s + 12ac^2t-
\end{equation} 
$$- 6bcrt - 3c^2rt + 24abc - 6b^2r - 6bcr - 6bcs - 6c^2s - 6act - 6c^2t +$$
$$+ 2crt - 12ab - 12ac - 12bc + 4br + 4cs + 4ct + 8a + 8b + 8c - 8 $$
and following similar calculations as in the previous paragraphs, we obtain the remaining case $(b)$.
\end{proof}

\end{example}

Observe that in cases $(a)$, $(c)$ and $(e)$ the image $\phi_L(X) \subset |L|^\vee$ is a smooth scroll in planes, so that its dual variety is not a hypersurface. There are extra components in the discriminant. In case $(b)$, on the section corresponding to the first summand, we get a map onto a (linear) $\P^1$, and on the second one we get a map onto a cone, which is the contraction of the minimal section of $X$. Therefore $\phi_L(X)$ is ruled in planes parametrized by a $\P^1$.
In the rest of the cases the image $\phi_L(X) \subset |L|^\vee$ is a cone with vertex a point, hence the discriminant contains a hyperplane, and the singular locus of the general element in the discriminant is positive dimensional.

\section{Toric varieties with big defect}\label{sec:toricdisc}

 The defect is known to be bounded by the dimension of the variety and a complete classification of the extremal cases can be applied to the case of toric varieties. Moreover, similar bounds on the defect can be stated in terms of the dimension on the linear system $|L|$. The following results are written in \cite{LM1}, let us collect them conveniently for our purposes. 

\begin{lemma}\label{lemma:lm1summary} Let $(X,L)$ be as in  Problem \ref{problem} and let $f$ be the maximal dimension of a fiber $F$ of $\phi_L$. Then $k \leq $ {\em min}$\{n-f,N\}$. Moreover, if $k=n-f$ then, for any $x,x' \in F$  it holds  that $|L-2x| =|L-2x'|$ is a linear component of maximal dimension of the discriminant $\cD(X,L)$.
\end{lemma}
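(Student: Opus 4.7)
The plan is to exhibit inside $\cD$ a large linear subspace attached to each point of the maximal fiber, and when $k=n-f$ to force these subspaces to coincide via an incidence-variety argument. The bound $k\le N$ is immediate from $\dim\cD\ge -1$, so the substantive work is the inequality $k\le n-f$.

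For that inequality, I would fix a smooth point $x$ of a fiber $F$ of $\phi_L$ of maximal dimension $f$ and exploit that $L=\phi_L^{*}\cO_{|L|^{\vee}}(1)$: a section of $L$ vanishes at $x$ precisely when the corresponding linear form on $|L|^{\vee}$ vanishes at $p=\phi_L(x)$, so $|L-x|=|L-F|$ and every $s\in H^{0}(X,L-x)$ vanishes identically along $F$. In particular $ds_x$ annihilates the $f$-dimensional subspace $T_xF\subseteq T_xX$, so enforcing $ds_x=0$ imposes at most $n-f$ further linear conditions. Consequently $|L-2x|$ has codimension at most $n-f+1$ in $|L|$, and since any section vanishing to order two at $x$ is singular there, $|L-2x|\subseteq\cD$. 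This gives $\dim\cD\ge N-1-(n-f)$, i.e.\ $k\le n-f$.

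Now assume $k=n-f$. The estimate above becomes tight, so $\dim|L-2x|=\dim\cD$ at every smooth point of $F$; moreover $F$ must be smooth throughout, since at a point with $\dim T_xF>f$ the same computation would yield $\dim|L-2x|>\dim\cD$, contradicting $|L-2x|\subseteq\cD$. Each $|L-2x|$ is then a projective linear subspace of $|L|$ of dimension equal to $\dim\cD$, hence an irreducible component of $\cD$ of maximal dimension. To prove $|L-2x|=|L-2x'|$ for $x,x'\in F$ (assume $F$ irreducible, the general case following component-wise), I would study the incidence variety $\cI=\{(x,H)\in F\times|L|:H\in|L-2x|\}$ through its projections $p_1$ and $p_2$: the $p_1$-fibers are the $|L-2x|$, so $\dim\cI=f+N-1-(n-f)$, while the $p_2$-fibers are $\Sing(H)\cap F\subseteq F$ of dimension at most $f$. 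Combining, $\dim p_2(\cI)\ge N-1-(n-f)=\dim\cD$, and since $p_2(\cI)\subseteq\cD$, equality holds and the general $p_2$-fiber has dimension exactly $f$, hence equals $F$ by irreducibility.

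Consequently a general $H\in p_2(\cI)$ is singular at every point of $F$, that is, $H\in|L-2x|$ for all $x\in F$. Closedness of linear subspaces then gives $p_2(\cI)\subseteq\bigcap_{x\in F}|L-2x|$, while the reverse inclusion $|L-2x|\subseteq p_2(\cI)$ is tautological; the desired equality $|L-2x|=|L-2x'|$ drops out. I expect this final dimension bookkeeping on $\cI$ to be the delicate step, since it is precisely where the hypothesis $k=n-f$ is leveraged: it is what pins down the generic $p_2$-fiber to be all of $F$ and thereby couples the linear subspaces $|L-2x|$ across the fiber.
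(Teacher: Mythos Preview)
Your argument is correct and follows the same route as the paper's. The paper's own proof is quite terse (the lemma is essentially a summary of results from \cite{LM1}): it gives the same computation $|L-x|=|L-F|$ and $\operatorname{codim}_{|L-x|}|L-2x|\le n-f$, and for the ``moreover'' clause simply asserts $|L-2x|=|L-2x'|$ without spelling out why. Your incidence-variety argument is a legitimate way to fill in that gap.

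One small simplification: once you have established that $F$ is smooth and that $\dim|L-2x|=N-1-(n-f)$ is constant on $F$, the incidence variety $\cI$ is a projective bundle over the irreducible $F$, hence irreducible, and therefore $p_2(\cI)$ is an irreducible closed subset of $\cD$. Since each $|L-2x|\subseteq p_2(\cI)$ already has dimension $\dim\cD\ge\dim p_2(\cI)$, you get $|L-2x|=p_2(\cI)$ immediately, without needing to analyze the generic $p_2$-fiber. The fiber-dimension bookkeeping you carry out is correct but not required. Also note that your reduction ``the general case following component-wise'' only yields $|L-2x|=|L-2x'|$ for $x,x'$ in the same irreducible component of $F$; to get the full statement for arbitrary $x,x'\in F$ you would need to use that the toric map $\phi_L$ has connected fibers, so that the finitely many components of $\cD$ picked out by the various irreducible pieces of $F$ are linked. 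The paper does not address this point explicitly either.
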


\begin{proof} Since $\dim\cD(X,L) \geq -1$ then $k \leq N$. For the other inequality observe first that if $f=0$ then $L$ is ample, so that very ample, and $\phi_L$ is an embedding. Hence $k \leq n$, because its is the dimension of the general contact locus, which is a subvariety of $X$. If $f>0$ we have that that $|L-x|=|L-F|$ for any $x \in F$. Hence, the codimension of $|L-2x|$ in $|L-x|$ is smaller than or equal to $n-f$, and then $\dim |L-2x| \geq N-1-(n-f)$. As $|L-2x| \subset \cD(X,L)$ we get the expected inequality. If equality of dimensions holds, that is, if $k=n-f$, then $|L-2x|$ is a linear component of maximal dimension of $\cD(X,L)$ and moreover, $|L-2x|=|L-2x'|$ for $x' \in F$, as stated. 
\end{proof}

Let us comment the extremal cases.

\begin{lemma}\label{lema:bound} With the notation of Lemma \ref{lemma:lm1summary} we get that $k \leq$ {\em min}$\{n,N\}$, and moreover:
\begin{itemize}

\item[(i)] If $k=n$ then $(X,L)=(\P^n, \cO(1))$.
\item[(ii)] If $k=n-1$ then $X=\P_{\P^{n-1}}(\cO \oplus \cO(r_1))$ is a toric projective bundle of rank one $\pi:X \to \P^{n-1}$ and $L=\pi^*(\cO(1))$.
\item[(iii)] If $k=N$ then $\phi_L:X \to \P^{N}$ is a smooth toric surjective morphism of connected fibers; 
\item[(iv)] If $k=N-1$ then $\phi_L:X \to \P^{N}$ is surjective. 
\end{itemize}
\end{lemma}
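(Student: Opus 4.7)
My plan is to deduce the bound $k\leq\min\{n,N\}$ directly from Lemma~\ref{lemma:lm1summary} (since $f\geq 0$), and then to treat the four extremal cases in two groups: (i) and (ii), where the bound $k\leq n-f$ is saturated and the argument uses the structure of the fibers of $\phi_L$; and (iii) and (iv), where the bound $k\leq N$ is saturated and the argument uses the geometry of $\phi_L(X)\subset\P^N$.

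For (i), $k=n$ forces $f=0$, so in the factorization~(\ref{eq:factors}) the toric morphism $\phi$ has $0$-dimensional connected fibers and is thus a birational finite toric morphism onto the normal $X_{\Sigma_{P_D}}$, hence an isomorphism. Thus $L=L_{P_D}$ is ample on the smooth toric $X$ and so very ample, and $\phi_L$ embeds $X$ in $|L|^\vee$ with dual defect $n$; the classification of Di~Rocco~\cite{dirocco} then forces $(X,L)=(\P^n,\cO(1))$. For (ii), $k=n-1$ gives $f\leq 1$, and $f=0$ is ruled out by the same reasoning as in (i) combined with the fact that the possible defects of toric smooth very ample embeddings are $0$, $n$, and $n-2m$ for $1\leq m<n/2$, so $n-1$ is excluded. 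Hence $f=1$, and since every positive-dimensional fiber is a rational curve on which $L$ has degree $0$, $\phi_L$ is equidimensional; by smoothness of $X$ and toric contraction theory it is a $\P^1$-bundle $\pi\colon X\to Y$ over a smooth toric $(n-1)$-fold, with $L=\pi^*H_Y$ for some very ample $H_Y$. The identification $|L|\simeq|H_Y|$ provided by $\pi_*\cO_X=\cO_Y$ gives $\cD(X,L)=\cD(Y,H_Y)$ and hence $k(Y,H_Y)=n-1=\dim Y$, so case (i) applied to $(Y,H_Y)$ yields $(Y,H_Y)=(\P^{n-1},\cO(1))$. Finally a toric $\P^1$-bundle is split, so $X=\P_{\P^{n-1}}(\cO\oplus\cO(r_1))$ after normalization.

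For (iii), $k=N$ means $\cD=\emptyset$, equivalently $\ker(H^0(L)\to J_1(L)_x)=0$ for every $x$. This forces $N+1\leq\rk J_1(L)_x=n+1$ and the differential $d\phi_L$ to have rank $N$ everywhere, so $\phi_L\colon X\to\P^N$ is smooth; its image is simultaneously open (smoothness) and closed ($X$ proper), hence equal to $\P^N$, and its Stein factorization ends in an étale cover of $\P^N$, trivial by simple connectedness, so $\phi_L$ has connected fibers. For (iv), $k=N-1$ means $\dim\cD=0$; supposing $\phi_L$ were not surjective, $Y:=\phi_L(X)\subsetneq\P^N$ would be a non-degenerate proper subvariety, so $Y^\vee$ would be non-empty and contained in $\cD$, forcing $\dim Y^\vee\leq 0$. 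The classical inequality $\dim Y+\dim Y^\vee\geq N-1$ then forces $\dim Y\geq N-1$: if $\dim Y\leq N-2$ one has $\dim Y^\vee\geq 1$ immediately, and if $\dim Y=N-1$ then $Y$ is an irreducible non-linear (since non-degenerate) hypersurface whose Gauss map is non-constant, giving $\dim Y^\vee\geq 1$; both cases contradict $\dim\cD=0$.

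The main obstacle is case (ii), specifically the step asserting that $\phi_L$ is a genuine $\P^1$-bundle over a smooth toric $(n-1)$-fold, not merely a fibration with some jumping or divisorial exceptional fibers, so that the induction via (i) applies cleanly. Establishing this requires either invoking toric extremal contraction theory or carrying out a direct fan-theoretic analysis showing that under $f=1$, $k=n-1$, and the equality case of Lemma~\ref{lemma:lm1summary}, the refinement $\Sigma\to\Sigma_{P_D}$ must be trivial and $\Sigma_{P_D}$ must be the fan of a projective bundle. The remaining cases reduce to standard applications of Lemma~\ref{lemma:lm1summary}, the jet-bundle description of $\cD$, Di~Rocco's classification, and the duality inequality $\dim Y+\dim Y^\vee\geq N-1$.
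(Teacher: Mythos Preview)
Your overall strategy is sound for (i), (iii), and (iv), and these arguments are correct, but your route differs substantially from the paper's. The paper does not argue directly at all: it simply invokes the general (non-toric) classification results of \cite{LM1} --- \cite[Thm.~1.5]{LM1} for (i), \cite[Thms.~2.2, 3.2, 4.1]{LM1} for (ii), and \cite[Prop.~1.4]{LM1} for (iii) --- and then observes that the non-toric outputs of those theorems (e.g.\ $C\times\P^1$ with $g(C)\geq 1$) are excluded by the toric hypothesis. For (iv) the paper uses Lemma~\ref{lemma:phi}(ii) in one line: $k'\geq k=N-1$ forces $\dim\phi_L(X)^\vee\leq 0$, and a strictly contained non-degenerate $\phi_L(X)$ would be non-linear, contradicting biduality. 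Your jet-bundle argument for (iii) is essentially the content of \cite[Prop.~1.4]{LM1}, and your (iv) is a slightly longer version of the same biduality step; both are fine.

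The gap you flag in (ii) is genuine and is exactly the place where your self-contained approach diverges from the paper. You need $\phi$ to be a $\P^1$-bundle, not a toric birational contraction whose maximal fibers happen to be curves; the sentence ``since every positive-dimensional fiber is a rational curve on which $L$ has degree $0$, $\phi_L$ is equidimensional'' is not a proof of equidimensionality (triviality of $L$ on fibers holds automatically for \emph{any} $\phi^*$-pullback, regardless of fiber dimension). Closing this would require showing that a birational $\phi$ with $f=1$ forces $k<n-1$ --- true in the examples one checks (e.g.\ blow-ups of points in $\P^2$ give $k=0$, blow-ups of lines in $\P^3$ give $k=n-2$), but you have not supplied an argument. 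The paper sidesteps the issue entirely: \cite{LM1} classifies the $k=n-1$ case for arbitrary smooth $(X,L)$ with $L$ base-point free via adjunction-theoretic methods, so no toric fan analysis or equidimensionality claim is needed. If you want to keep your approach, the cleanest fix is to use the equality clause of Lemma~\ref{lemma:lm1summary} together with a dimension count for $|L-2x|$ at general versus exceptional points to rule out the birational case; otherwise, cite \cite{LM1} as the paper does.
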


%

\begin{proof} Statement (i) is just \cite[Thm~1.5]{LM1}. To prove (ii) we first consider $n=2$ so that by \cite[Thm.~2.2]{LM1} either $X$ is as in case (ii), or $X=C \times \mathbb{P}^1$, genus of $C$ greater than or equal to one, which does not fit with $X$ toric. For $n \geq 3$ we use similarly \cite[Thm.~3.2, Thm.~4.1]{LM1}. For (iii) just observe that $\dim \cD=-1$ says that $\cD$ is empty and \cite[Prop.~1.4]{LM1} applies, hence $\phi_L:X \to \P^{k}$ is a smooth morphism; we conclude by Lemma \ref{lemma:phi}(i).  In the last statement (iv) use  Lemma \ref{lemma:phi}(ii) to get $k' \geq k=N-1$ ($\phi_L(X)^\vee$ is either empty or zero dimensional), which implies $\dim (\phi_L( X)) = N$,  not being linear $\phi_L(X) \subset |L|^\vee$ if strictly contained.
%
%
\end{proof}


\begin{remark}\label{rem:(4)} Locally trivial toric fibrations over $\P^N$ provide examples of (iii), for instance projective bundles where, with the notation of Example \ref{ex:projectivebundles}, $L=D_0$. An example of (iv) is presented in Example \ref{ex:blowup}. In fact, this is a general way to produce pairs $(X,L)$ with linear discriminant. Indeed, consider $(X,L)$ as in (iii) with fibers of positive dimension, $N<n$, and suppose that $\phi_L$ has sections. One can take the image of a linear space $T \subset \P^k$  in the target via one of the sections of $\phi_L$. The blow up of $X$ along this subvariety has a map to $\P^k$ defined by a linear system whose only singular elements correspond to the linear space of hyperplane sections of $\P^k$ passing through $T$.
\end{remark}

A posible next situation to consider is $k=n-2 >0$.
\medskip

Let us write this lemma that we will need for this case and later on. For any projective variety $Y \subset {\mathbb P^N}$ we denote by $\langle Y \rangle$ the minimal linear space in ${\mathbb P^N}$ containing $Y$.

\begin{lemma}\label{lemma:coneoveracurve} Let  $(X,L)$ be as in Problem \ref{problem} such that $\phi_L(X) \subset |L|^\vee$ is a $m$-dimensional cone over a non-defective curve $C$ with vertex $V=\mathbb{P}^{m-2}$.
 If $\langle C \rangle \cap V=\emptyset$ then $\dim(X,L)\geq \dim |L|-1-(m-2)$. 
\end{lemma}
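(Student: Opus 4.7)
The plan is to exhibit inside $\cD(X,L)$ a linear subspace of $|L|$ of the claimed dimension, namely the sublinear system of hyperplanes passing through the vertex $V$. I would introduce
$$V^\perp\;:=\;\{H\in |L|\,:\, V\subset H\}\,\subset\, |L|,$$
which is a linear subspace of codimension $\dim V+1=m-1$ in $|L|$, and hence of dimension $\dim|L|-1-(m-2)$. It is therefore enough to prove the containment $V^\perp\subseteq \cD(X,L)$.

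Fix $H\in V^\perp$. The hypothesis $\langle C\rangle\cap V=\emptyset$ means that $\phi_L(X)$ is precisely the disjoint join $J(V,C)$, and the first step is the essentially set-theoretic identity $H\cap J(V,C)=J(V,H\cap C)$ (which promotes to a scheme-theoretic equality), where $H\cap C$ is the zero-dimensional subscheme of $C$ of length $\deg C$ cut out by the hyperplane $H\cap \langle C\rangle$ of $\langle C\rangle$. From here I would split into two cases: either the support of $H\cap C$ contains two or more distinct points, in which case $J(V,H\cap C)$ is the union of the $(m-1)$-planes joining $V$ with each of those points and is therefore reducible; or $H\cap C$ concentrates at a single fat point of $C$, in which case $J(V,H\cap C)$ is a non-reduced scheme structure on one $(m-1)$-plane. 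Either way, the hyperplane section $H\cap \phi_L(X)$ is reducible or non-reduced.

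Pulling back through the surjective morphism $\phi_L$ transfers this pathology to $X$: a decomposition $H\cap \phi_L(X)=\sum n_i Y_i$ yields $D_H=\phi_L^{-1}(H)=\sum n_i\phi_L^*(Y_i)$, where every $\phi_L^*(Y_i)$ is a non-zero effective divisor on $X$ by dominance of $\phi_L$, so $D_H$ is still reducible or non-reduced on the smooth variety $X$. Any such Cartier divisor on a smooth projective variety is automatically singular --- along the non-empty intersection of two distinct components in the reducible case, or along the support of the multiple component in the non-reduced case --- so $H\in \cD(X,L)$. This proves $V^\perp\subseteq \cD(X,L)$ and the bound. I expect the main delicate point to be the scheme-theoretic identity $H\cap J(V,C)=J(V,H\cap C)$, which is exactly where the disjointness hypothesis $\langle C\rangle\cap V=\emptyset$ is used, together with the handling of the non-reduced case, where the singularity of $D_H$ has to be derived from the non-reducedness of the scheme structure on $H\cap \phi_L(X)$ rather than from visible reducibility.
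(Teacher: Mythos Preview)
Your approach is the same as the paper's: exhibit the linear subspace $V^\perp=\{H\in|L|:V\subset H\}$ of dimension $\dim|L|-1-(m-2)$ inside $\cD(X,L)$ by showing that every (or enough) such $H$ pulls back to a reducible divisor on $X$. The paper, however, argues only for the \emph{general} $H\in V^\perp$; since $\cD$ is closed this suffices, and for general $H$ the intersection $H\cap C$ consists of $\deg C>1$ distinct reduced points (here ``$C$ non-defective'' is used to rule out $C$ a line). So the paper never confronts the non-reduced case, and the scheme-theoretic identity $H\cap J(V,C)=J(V,H\cap C)$ that you flag as delicate is not needed.

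There is one small but genuine gap in your argument. You assert that a reducible effective Cartier divisor on a smooth projective variety is singular ``along the non-empty intersection of two distinct components'', but you do not justify the non-emptiness --- and in general it can fail (two disjoint rulings on $\P^1\times\P^1$, say). The paper handles this by invoking Bertini's connectedness theorem, using $\dim\phi_L(X)=m\geq 2$: the general $H\in|L|$ through $V$ is connected, hence reducible and connected, hence singular. In your setup there is an even more direct fix that works for every $H$: each component $\phi_L^{-1}(\langle V,p_i\rangle)$ contains the non-empty set $\phi_L^{-1}(V)$, so all components meet there. Once you insert that remark your proof is complete, and the non-reduced case becomes superfluous (though not wrong).
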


\begin{proof}  Since $\langle C \rangle \cap V=\emptyset$ then there exists a rational map $r: X \to C$ defined as $r(x)=\langle V,x\rangle \cap C$. Consider now a general hyperplane $H$ containing $V$, which is meeting $C$ in a finite set of points $p_1, \dots, p_d$, $d>1$ (recall that $C$ is not a line). Then the corresponding element in $|L|$, called $H$ by abuse of notation, contains the union of the divisors $r^{-1}(p_i)$ and consequently it is reducible. Since $\dim \phi_L(X)>1$, $H$ is connected by Bertini Theorem and finally singular.
\end{proof}

Hence, when $k=n-2 >0$ we get the following:

\begin{proposition}\label{prop:notbig}  Let  $(X,L)$ be as in Problem \ref{problem},  $k=n-2>0$. If $L$ is not big then $(X,L)$ is as described in {\em (iii)} and {\em (iv)} of Lemma \ref{lema:bound}.
%
\end{proposition}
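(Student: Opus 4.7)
The plan is to combine the generic fiber-dimension bound of Lemma \ref{lemma:lm1summary} with the structural description of $\phi_L(X)$ provided by Lemma \ref{lemma:phi}(iii), and then to rule out a single troublesome case with Lemma \ref{lemma:coneoveracurve}.

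First, because $L$ is base point free and not big, $L^n=0$, so $d:=\dim\phi_L(X)<n$. Writing $f\geq 1$ for the maximal fiber dimension of $\phi_L$, we have $f=n-d$, and Lemma \ref{lemma:lm1summary} yields $k\leq n-f=d$. Combined with $k=n-2$, this forces $d\in\{n-2,n-1\}$. Next, let $k'$ denote the defect of $\phi_L(X)\subset\P^N$; since $\phi_L(X)^\vee\subseteq\cD(X,L)$, we have $k'\geq k=n-2$. Invoke Lemma \ref{lemma:phi}(iii): there exist integers $c,r\geq 0$ with $k'=r-c$, a dominant torus-equivariant rational map $\varphi:\phi_L(X)\dashrightarrow(\C^*)^c$, and non-defective toric varieties $X_i\subset\P^{N_i}$, $0\leq i\leq r$, sitting in pairwise disjoint linear subspaces, whose join is projectively equivalent to the closure of the general fiber of $\varphi$.

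The dimension identity $d=c+\sum_i\dim X_i+r$, together with $r=k'+c$, gives
\[
\textstyle\sum_i\dim X_i=d-k'-2c\leq (n-1)-(n-2)-2c=1-2c.
\]
Hence $c=0$ and $\sum_i\dim X_i\in\{0,1\}$. In particular $\phi_L(X)$ itself is the join of the $X_i$'s. We split into two cases.

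Case A: $\sum_i\dim X_i=0$. Then each $X_i$ is a point and the join of $r+1$ points is $\P^r=\P^d$, linearly nondegenerate in $\P^N$, so $N=d$. If $d=n-2$ then $k=n-2=N$, placing $(X,L)$ in Lemma \ref{lema:bound}(iii); if $d=n-1$, then $k=n-2=N-1$, placing $(X,L)$ in Lemma \ref{lema:bound}(iv). Case B: $\sum_i\dim X_i=1$. Exactly one $X_i$, say $X_0$, is a non-defective curve $C\subset\P^{N_0}$ (in particular $N_0\geq 2$ and $C$ is not a line), and the remaining $X_i$ are points. Then $\phi_L(X)$ is a $d$-dimensional cone over $C$ with vertex $V=\P^{d-2}$, and the pairwise disjointness of the $\P^{N_i}$'s provided by Lemma \ref{lemma:phi}(iii) ensures $\langle C\rangle\cap V=\emptyset$. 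Applying Lemma \ref{lemma:coneoveracurve} with $m=d$ gives $\dim\cD(X,L)\geq N-1-(d-2)$, i.e.\ $k\leq d-2\leq n-3$, contradicting $k=n-2$. Hence Case B does not occur, completing the proof.

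The main obstacle I anticipate is the bookkeeping: extracting the inequality $\sum_i\dim X_i\leq 1-2c$ from Lemma \ref{lemma:phi}(iii) so as to reduce to the two explicit geometric models (a linear space, or a cone over a curve), and verifying that the disjointness condition required by Lemma \ref{lemma:coneoveracurve} is exactly the one furnished by Lemma \ref{lemma:phi}(iii) through the pairwise disjointness of the ambient $\P^{N_i}$'s.
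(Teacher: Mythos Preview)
Your proof is correct and follows essentially the same route as the paper: both reduce to the cone-over-a-curve configuration via Lemma \ref{lemma:phi}(iii) and then derive a contradiction from Lemma \ref{lemma:coneoveracurve}, with your Case A corresponding to the paper's preliminary disposal of the cases $N\in\{n-2,n-1\}$ by direct appeal to Lemma \ref{lema:bound}. One small slip: the \emph{maximal} fiber dimension satisfies $f\geq n-d$, not $f=n-d$ in general, but since Lemma \ref{lemma:lm1summary} still gives $k\leq n-f\leq d$, your conclusion $d\in\{n-2,n-1\}$ stands.
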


\begin{proof} By Lemma \ref{lema:bound} we can assume that $k=n-2 \leq N-2$, that is $n\leq N$.
Then, $\phi_L(X) \subset |L|^\vee$ is a positive defect variety whose defect $k'$ satisfies $k' \geq n-2>0$. Moreover, since $\phi_L(X) \subset |L|^\vee$ is not linear then $\dim \phi_L(X) \geq n-1$, in fact equal, not being $L$ big, and so $k'=n-2$. By  Lemma \ref{lemma:phi}(iii), $\phi_L(X) \subset |L|^\vee$ is a cone over a non-defective toric curve $C$ with vertex $V$, a linear $\mathbb{P}^{n-3}$. This contradicts $k =n-2$ by Lemma \ref{lemma:coneoveracurve}.

\end{proof}


Let us propose the following definition

\begin{definition}\label{def:genscroll} A smooth projective variety $X$ is called a {\em generalized scroll over a line} if there exists a pencil $P$ of divisors and a base point free divisor $L$  such that $\dim |L| \geq \dim X$ and $\phi_L:X \to |L|^\vee$  is sending the general element $D \in P$ onto a linear space of dimension $n-1$ inside $|L|^\vee$. 
\end{definition}

Observe that, if $X$ is a generalized scroll over a line, then the image $\phi_L(X) \subset |L|^\vee$ is ruled in $\P^{n-1}$'s. The projective space $\P^n$, $n \geq 2$, is a generalized scroll over a line just taking the pencil $P$ of hyperplanes contained a fixed $\P^{n-2}$ and $L=\cO(1)$.

\begin{example}\label{ex:genscrolll} 
This generalizes the notion of scroll in $\P^{n-1}$'s over $\P^1$: if there is such structure of scroll, say $\pi: X \to \P^1$, embedded by $|L|$, then one can choose $P=|\pi^*\cO(1)|$.

\end{example}

When $k=n-2$ one expects to have linear spaces of dimension $n-2$ through a general point of $\phi_L(X)$ but, as in the very ample case, one gets linear spaces of bigger dimension. 

\begin{theorem}\label{thm:big} Let  $(X,L)$ be as in Problem \ref{problem},  $k=n-2$. If $L$ is big then 
 $(X,L)$ is a generalized scroll over a line.
\end{theorem}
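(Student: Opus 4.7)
The plan is to apply the structure theorem of Lemma \ref{lemma:phi}(iii) to $\phi_L(X) \subset |L|^\vee$ and then pull back an appropriate $\P^{n-1}$-ruling of $\phi_L(X)$ to produce the required pencil on $X$. Since $L$ is big, $\phi_L$ is generically finite, so $\dim \phi_L(X) = n$; combined with the inclusion $\phi_L(X)^\vee \subseteq \cD(X,L)$ and Lemma \ref{lemma:phi}(ii), the defect $k'$ of $\phi_L(X) \subset |L|^\vee$ satisfies $k' \geq k = n-2$. Applying Lemma \ref{lemma:phi}(iii) yields a torus-equivariant dominant rational map $\varphi:\phi_L(X) \dashrightarrow (\C^*)^c$ whose general fiber is projectively equivalent to a join of $r+1$ non-defective toric varieties $X_i \subset \P^{N_i}$, with $k' = r - c$. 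Since each such fiber has dimension $n - c = \sum_i \dim X_i + r$, we obtain $\sum_i \dim X_i = n - c - r \leq 2 - 2c$, forcing $c \in \{0,1\}$.

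For $c = 1$, every $X_i$ must collapse to a point and $r = n-1$, so the general fiber of $\varphi$ is a linear $\P^{n-1}$. Replacing the target by its smooth projective toric completion $\P^1$, the composition $X \dashrightarrow \phi_L(X) \dashrightarrow \P^1$ gives a rational map whose associated pencil has general member sent by $\phi_L$ onto one of these $\P^{n-1}$'s, exhibiting the generalized scroll structure. For $c = 0$ the rational map $\varphi$ is trivial and $\phi_L(X)$ is itself a join of $r + 1$ non-defective toric varieties with $\sum_i \dim X_i = n - r$. The two extremal subcases are immediate: if $r = n$, all $X_i$ are points and $\phi_L(X) = \P^n$, so any pencil of hyperplanes through a fixed $\P^{n-2}$ pulls back to the desired pencil; if $r = n-1$, exactly one $X_i$ is a non-linear toric curve $C \cong \P^1$ and the rest are points, so $\phi_L(X)$ is a cone over $C$ with vertex a linear $\P^{n-2}$, and the $\P^{n-1}$'s spanned by the vertex together with the individual points of $C$ form a pencil parametrized by $C \cong \P^1$.

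The delicate step, which I expect to be the main obstacle, is the remaining subcase $c = 0$, $r = n - 2$, $\sum_i \dim X_i = 2$: here $\phi_L(X)$ is either a cone with vertex $\P^{n-3}$ over a non-defective toric surface $S$, or a cone with vertex $\P^{n-4}$ over the join of two non-linear toric curves. The contact locus guaranteed by $k' = n - 2$ is only a $\P^{n-2}$ through a general point, so an extra input is needed to exhibit a $\P^1$-family of linear $\P^{n-1}$'s. My plan is to exploit the smooth toric structure of $X$ together with the torus-equivariance of $\varphi$: in the surface subcase, one argues that a non-defective toric surface occurring as the base of the cone must be linearly ruled in its given embedding, so that its $\P^1$-family of lines produces the requested $\P^{n-1}$'s through the vertex; in the two-curves subcase, analogous toric-rigidity considerations force the configuration either to degenerate into one of the previously handled cases or to directly provide the ruling. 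Once a $\P^1$-family of linear $\P^{n-1}$'s inside $\phi_L(X)$ is produced, pulling it back through $\phi_L$ yields the required pencil on $X$ and completes the proof.
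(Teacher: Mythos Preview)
Your setup and case division match the paper's approach exactly, and the easy cases ($c=1$; $c=0$ with $r=n$ or $r=n-1$) are fine. The genuine gap is in the two hard subcases with $c=0$, $r=n-2$, where your proposed plan would not work as stated.

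In the surface subcase, the assertion that a non-defective toric surface must be linearly ruled in its given embedding is simply false: the Veronese surface $v_2(\P^2)\subset\P^5$ is toric, non-defective, and contains no lines. So no amount of ``toric rigidity'' of $\phi_L(X)$ alone will produce the $\P^{n-1}$-ruling. The paper's argument is quite different and crucially uses the hypothesis $k(X,L)=n-2$ (not merely $k'\geq n-2$): one picks a general point $v$ of the vertex $V=\P^{n-3}$, observes that every hyperplane in $\phi_L(X)^\vee$ pulls back to a section singular somewhere on the positive-dimensional fiber $\phi_L^{-1}(v)$, and then uses the dimension bound $\dim|L-2p'|\geq N-n$ from Lemma~\ref{lemma:lm1summary} together with $\dim\cD=N-n+1$ to force a one-parameter family of linear $\P^{N-n}$'s inside $\phi_L(X)^\vee$. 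Dualizing these (via biduality) yields the required $\P^{n-1}$'s in $\phi_L(X)$, showing a posteriori that $S$ is ruled in lines.

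In the join-of-two-curves subcase, the paper does not produce a ruling at all: it shows this case \emph{cannot occur} under $k=n-2$. The argument fixes a general $p_1\in C_1$, considers the cone $Y$ over $C_2$ with vertex $\langle V,p_1\rangle$, computes tangent spaces by Terracini's lemma, and compares the resulting piece $\cD_{p_1}\subset\phi_L(X)^\vee$ with the linear systems $|L-2p'|$ for $p'\in\phi_L^{-1}(p_1)$. Each of the possible configurations (again using the exact value $\dim\cD=N-n+1$) leads to a contradiction. Your plan, which only invokes properties of $\phi_L(X)$, misses that the essential input in both hard cases is the interaction between the discriminant of $(X,L)$ upstairs and the dual of $\phi_L(X)$ downstairs.
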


\begin{proof} Since $L$ is big, then $\dim \phi_L(X)=n$. If $\dim |L|=n$ then $\phi_L(X)=\P^n$ and we have a structure of generalized scroll over a line. Assume in the sequel that $\dim |L|>n$. Thus, $k' \geq n-2$, see Lemma  \ref{lemma:phi}(ii). 

If $k'=n-1$ then, by Lemma \ref{lemma:phi}(iii), $c=0$ and $\phi_L(X)$ is a cone over a non-defective curve $C$ (rational,  being toric), whose vertex $V$ is a linear $\P^{n-2}$. This provides a structure of generalized scroll over a line. In fact, consider the rational map $r: \phi_L(X) \to C$ defined on a general point $x \in \phi_L(X)$ as $r(x)=\langle V,x \rangle \cap C$ and the closure of its general fibers are linear spaces of dimension $n-1$. 


If $k'=n-2$ then, again by Lemma \ref{lemma:phi}(iii), either $c=1$ or $c=0$. If $c=1$  then there exists a rational map $\varphi: X \to \mathbb{C}^*$ whose fibers are linear $\mathbb{P}^{n-1}$'s. This gives the structure of generalized scroll over a line. If $c=0$ then $\phi_L(X)$ is of one of these types:


Case I: a cone over a join of two toric non-defective curves, say $C_1$ and $C_2$ with vertex $V$ of dimension $n-4$. 

Case II: a cone over a toric non-defective surface $S$ which vertex $V$ of dimension $n-3$.

Let us consider Case I. Since $C_2$ is not a line and $V \cap \langle C_2\rangle =\emptyset$ then $$\dim \langle V, C_2 \rangle \geq n-4+2+1=n-1.$$ If equality holds and the general $p_1 \in C_1$ belongs to $\langle V, C_2 \rangle$ then $\phi_L(X) \subset \P^{n-1 }$, a contradiction, hence for the general $p_1 \in C$, and, by the same argument, for the general $p_2 \in C_2$, one gets:
\begin{equation}\label{eq:joinbound} \dim \langle V,p_1,C_2  \rangle \geq  n, \; \dim \langle V,p_2,C_1  \rangle \geq  n,
\end{equation}
Consider the cone $Y \subset \phi_L(X)$ over $C_2$ with vertex in the linear space spanned by $V$ and a general $p_1 \in C_1$. Let $Y_0 \subset Y$ be the open subset of points in  $Y$ which are not singular in $\phi_L(X)$. Denote by $\cD_{p_1} \subset \phi_L(X)^\vee \subset \cD$ the closure of the subvariety of hyperplanes in $\phi_L(X)^\vee$ singular at points in $Y_0$.
Since $\dim \phi_L(X)^\vee =N-1-(n-2)$ then 
\begin{equation}\label{eq:dimDp1}\dim \cD_{p_1}  =\dim \phi_L(X)^\vee-1=N-n
\end{equation}
 Since $\phi_L(X)$ is a cone over the join of $C_1$ and $C_2$ with vertex $V$, then for a general $y \in Y$ there exists $p_2 \in C_2$ such that $y \in \langle V,p_1,p_2\rangle$ and, by Terracini Lemma, the tangent space $T_{\phi_L(X),y}$ to $\phi_L(X)$ at $y$  is the linear span
\begin{equation}\label{eq:terracini}
T_{\phi_L(X),y}=\langle V, T_{C_1,p_1}, T_{C_2,p_2} \rangle. 
\end{equation}
 Therefore, any hyperplane $H$ tangent to $\phi_L(X)$ at $y \in Y_0$ is singular along the linear space $\langle V, p_1, p_2\rangle$. This implies that the corresponding element in $|L|$ is singular at some point on $\phi_L^{-1}(p_1)$. Hence, there exists an irreducible subset $F_{p_1} \subset \phi_L^{-1}(p_1)$ such that  the closure in $\cD$ of the union when $p_1' \in F_{p_1}$ of the linear systems $|L-2p_1'|$ is an irreducible subvariety $\cD_{p_1}' \subset \cD$ containing $\cD_{p_1}$:
$$\cD_{p_1} \subset \cD_{p_1}' =\overline{\cup_{p_1' \in F_{p_1}}|L-2p_1'|} \subset \cD$$
By Lemma \ref{lemma:lm1summary} and in view of (\ref{eq:dimDp1}) one gets the following possibilities:
\begin{enumerate}
\item either $\dim F_{p_1}=0$, $F_{p_1}=p_1'$ and
\begin{enumerate}
\item $\cD_{p_1} \subset \cD_{p_1}' =|L-2p_1'|=\P^{N-n+1}$, or
\item  $\cD_{p_1}=\cD_{p_1}' =|L-2p_1'|=\P^{N-n}$; or 
\end{enumerate}
\item $\dim F_{p_1}=1$, and
\begin{enumerate}
\item $\cD_{p_1} \subset \cD_{p_1}'$, $\dim \cD_{p_1}'=N-n+1$, or
\item  $\cD_{p_1} \subset  \cD_{p_1}'=|L-2p_1'|=|L-2p_1''|=\P^{N-n}$, for any $p_1',p_1'' \in F_{p_1}$; or
\end{enumerate}
\item $\dim F_{p_1}=2$, $\cD_{p_1} \subset |L-2p_1'|=|L-2p_1''|=\P^{N-n}$, for any $p_1',p_1'' \in F_{p_1}$
\end{enumerate}

%

In cases (1.a), (2.a), and (3) $\cD'_{p_1}$ is an irreducible component of $\cD$ and then one can repeat the procedure  for any other general $q_1 \in C_1$ to get $$\cD'_{p_1}=\cD'_{q_1},$$ which is a contradiction since the general hyperplane in $\cD_{p_1}$ is singular in a linear space $\langle V,p_1,p_2\rangle$, which does not contain $C_1$, see (\ref{eq:joinbound}).

In cases (1,b) and (2.b), for $y \in Y_0$, the set of hyperplanes containing $T_{\phi_L(X),y}$ is contained in a linear space of dimension $N-n$, which is the linear space of hyperplanes containing a fixed $T=\P^{n-1}$. Hence one can use (\ref{eq:terracini}) to show that for the general $p_2 \in C_2$  the set of hyperplanes containing $T_{\phi_L(X),y}$ is contained in the set of hyperplanes containing $T$, which leads to:
$$\P^{n-1}=T \subset \cap_{p_2 \in C_2}\langle V,T_{C_1,p_1},T_{C_2,p_2} \rangle, $$ and consequently to
$$\P^{n-2} \subset \cap_{p_2 \in C_2}\langle V,p_1,T_{C_2,p_2} \rangle$$ But this implies that $Y$ is a cone with vertex a linear space of dimension $n-2$, and hence $Y$ is linear, contradicting (\ref{eq:joinbound}).

Let us consider Case II. Take $p \in V$ a general point. Since any hyperplane section in $\phi_L(X)^\vee$ is singular along $V$, then the corresponding element in $\cD$ is singular at some point in $\phi_L^{-1}(v)$. In a similar way that in Case I, consider the singular points on the fiber  $\phi_L^{-1}(v)$ of hyperplane sections corresponding to smooth points of $\phi_L(X)^{\vee}$ . This leads to an irrreducible $F_v \in \phi_L^{-1}(v)$ such that $\phi_L(X)^\vee$ is contained in (in fact equal because it is of maximal dimension) $\cD_v$ the closure in $\cD$ of the union of the linear systems $|L-2p'|$,  $p' \in F_v$. Since $\dim \phi_L(X)^\vee=N-n+1$ and $\phi_L(X)^\vee \subset |L|$ is not linear then $\dim F_v=1$ and $\dim |L-2p'|=N-n$ for all $p' \in F_v$. In fact, $\dim F_v=0$ or $2$, or $\dim F_v=1$ and $|L-2p'|=|L-2q'|$, for all $p',q' \in F_v$ would imply $\phi_L(X)^\vee$ linear. Consider $p' \in F_v$ and the linear space $\P^{N-n} =|L-2p'| \subset \phi_L(X)^\vee$. The general $H\in |L-2p'|$ is a smooth point of $\phi_L(X)^\vee$, hence it is singular along a linear space spanned by the vertex and a point $s_H \in S$ where $\P^n=\langle T_{S,s_H}, V\rangle \subset H$. Since $\dim |L-2p'|>N-1-n$ then $s_H \ne s_{H'}$ for $H,H' \in |L-2p'|$ and one can collect the points $s_H$ in an irreducible curve $C_{p'}$ to construct a cone $Y_{p'}$ with vertex $V$ over $C_{p'}$, whose dimension is $n-1$. For the dual of that cone we get that $Y_{p'}^\vee \subset |L-2p'|=\P^{N-n}$ and then, by biduality, there exists a $\P^{n-1}$ contained in $Y_{p'}$, which is showing that $Y_{p'}=\P^{n-1}$, and $C_{p'}$ is a line. Two different general points $p' \ne q'$ in $F_{v}$ lead to different linear spaces $\P^{n-1}_{p'}\ne \P^{n-1}_{q'}$ and then to a positive dimensional family of linear spaces of dimension $n-1$ on $\phi_L(X)$, which finally provides the structure of generalized scroll over a line. Consider $S'$ a general section of $\phi_L(X)$ by $n-2$ hyperplanes not containing $V$, which is then swept out by a one dimensional  family of lines. Since $S'$ is dominated by $S$, then $S'$ is rational. Since there is a line trough a general point of $S'$, then it is ruled in lines. Hence there exists a rational map $r': S' \to \P^1$ such that the fibers are the lines, and then a structure of generalized scroll over a line for $\phi_L(X)$.

\end{proof}

\begin{example}\label{ex:nef} The different possibilities appearing in the proof of Theorem \ref{thm:big} are effective. Consider for instance a rank two projective bundle $\P_{\P^1}(\cO \oplus \cO(r_1)\oplus \cO(r_2))$, see Example \ref{ex:projectivebundles}. If $r_1\ne 0$ then the contraction of the minimal section is providing a map onto a cone of vertex a point over the ruled surface $\mathbb{P}_{\mathbb{P}^1}(\mathcal{O}(r_1)\oplus \mathcal{O}(r_2))$. If $r_1=0$ we get a morphism to a cone with vertex a line over a rational normal curve of degree $r_2$. If, in particular, $r_2=1$, this is nothing but the contraction of the exceptional divisor of the blow up of $\mathbb{P}^3$ along a line. Finally, with the notation of Example \ref{ex:lbhirz} take $X=\mathbb{P}_{\mathbb{P}^1 \times \mathbb{P}^1}(\mathcal{O}(sD_0') \oplus \mathcal{O}(tD_3'))$, with $s,t>0$, the tautological bundle $L$ defines a map onto the join of two rational normal curves of degree $s$ and $t$. In this example $c_3(J_1(L))=2(t(s-1)+s(t-1))$, which  only vanishes when $s=t=1$, that is, when $\phi_L(X)=\P^3$.
\end{example}

Let us end this section with the case $k=N-2$.

\begin{proposition}\label{prop:N-2}  Let  $(X,L)$ be as in Problem \ref{problem}. If  $k=N-2$  then $\phi_L:X \to \P^{N}$ is surjective.
\end{proposition}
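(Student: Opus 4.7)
The plan is to argue by contradiction, following a strategy analogous to the proof of Proposition \ref{prop:notbig}. Suppose that $\phi_L$ is not surjective; since $|L|$ is a complete linear system, $\phi_L(X) \subset |L|^\vee = \mathbb{P}^N$ is non-degenerate, so being a proper subvariety it is necessarily non-linear. By Lemma \ref{lemma:phi}(ii) its defect satisfies $k' \geq k = N - 2 > 0$, and since $\phi_L(X)$ is ruled in $\mathbb{P}^{k'}$'s but is not itself linear, $\dim \phi_L(X) > k'$. Combined with $\dim \phi_L(X) \leq N - 1$, this forces $\dim \phi_L(X) = N - 1$ and $k' = N - 2$.

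The next step is to exploit Lemma \ref{lemma:phi}(iii): there is a torus equivariant dominant rational map $\varphi \colon \phi_L(X) \to (\mathbb{C}^*)^c$ whose generic fiber is projectively equivalent to the join of $r+1$ non-defective toric varieties $X_i \subset \mathbb{P}^{N_i}$ in pairwise disjoint linear spans, with $r - c = k' = N - 2$. Comparing dimensions of a fiber yields $\sum_{i=0}^r \dim X_i + r = \dim \phi_L(X) - c = N - 1 - c$, and substituting $r = N - 2 + c$ gives $\sum \dim X_i = 1 - 2c$. Since the $\dim X_i$ are non-negative integers, we must have $c = 0$ and $\sum \dim X_i = 1$; consequently $r = N - 2$, exactly one $X_i$ is a non-defective toric curve $C$, and the remaining $N - 2$ are single points. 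The constraint $\sum N_i + r = N$ then forces $N_0 = 2$, so $C \subset \mathbb{P}^2$ is a non-degenerate plane curve, and $\phi_L(X)$ is the cone over $C$ with vertex the linear $\mathbb{P}^{N-3}$ spanned by the $N-2$ points, which is automatically disjoint from $\langle C \rangle$ by the disjointness of the ambient $\mathbb{P}^{N_i}$'s.

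Finally, I would apply Lemma \ref{lemma:coneoveracurve} with $m = \dim \phi_L(X) = N - 1$: its hypotheses are all met and the conclusion yields $\dim \cD(X,L) \geq N - 1 - (m - 2) = 2$, hence $k \leq N - 3$, contradicting the assumption $k = N - 2$. The main obstacle is carrying out the dimension bookkeeping in Lemma \ref{lemma:phi}(iii) cleanly and verifying that the only configuration compatible with the constraints is the cone over a plane curve; once this reduction is in place, Lemma \ref{lemma:coneoveracurve} closes the argument immediately.
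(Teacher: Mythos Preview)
Your proof is correct and follows essentially the same route as the paper's: reduce to $\dim\phi_L(X)=N-1$, identify $\phi_L(X)$ as a cone over a non-defective curve via Lemma~\ref{lemma:phi}(iii), and conclude with Lemma~\ref{lemma:coneoveracurve}. Your version is in fact more streamlined---the paper first runs a case analysis comparing $N-2$ with $n$ and invokes Lemma~\ref{lema:bound} and Proposition~\ref{prop:notbig}, which your direct contradiction argument renders unnecessary---while your extra computation pinning down $C$ as a plane curve, though correct, is not needed for Lemma~\ref{lemma:coneoveracurve} to apply.
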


\begin{proof} By Lemma \ref{lemma:lm1summary}  it holds that $k \leq \min\{n,N\}$. The cases $k=N-2 \geq  n-1$ or $k=N-2=n-2$ and $L$ not big are not possible, see Lemma \ref{lema:bound} and Proposition \ref{prop:notbig}. If $k=N-2=n-2$ and $L$ is big, then we get in this case the surjectivity of $\phi_L:X \to \P^{N}$. Finally, if $k=N-2<n-2$ and $\phi_L:X \to \P^{N}$ is not surjective then $N-2<\dim \phi_L(X)<N$, i. e., $\dim\phi_L(X)=N-1$, and by Lemma \ref{lemma:phi}(iii) $\phi_L(X)$ is a cone over a curve with vertex in a linear space of dimension $N-3$, contradicting
Lemma \ref{lemma:coneoveracurve}.
\end{proof}

\begin{remark} Let us refer here to Lemma \ref{lema:bound} (iv) and Remark (\ref{rem:(4)}) for examples of this situation.
\end{remark}

\section{Low degree dual varieties}\label{sec:ld}

The degree of the dual variety of a smooth projective variety $X \subset \P^N$ is a classical invariant studied from different perspectives. If the embedding is degenerate, that is, there exists a linear space $\P^M$ of dimension $M<N$ containing $X$, the dual variety is a cone over the dual variety of $X \subset \P^M$, see \cite[Thm.~1.23]{Tev}. On the other hand, if $X \subset \P^N$  is not linearly normal then its dual variety is the corresponding linear section of the dual variety of the linearly nomal embedding $X \subset \P^M$, $M>N$, cf. \cite[Thm.~1.21]{Tev}. These two remarks show that, in order to study the degre of the dual variety of $X \subset \P^N$ one can focus in non-degenerate and linearly normal embeddings. Assuming this and taking the biduality ($X^{\vee \vee}=X$, see \cite[Thm.~1.2]{Tev}) into account, the dual variety cannot be linear and it is a quadric if and only if $X \subset\P^N$ is a smooth quadric. There exists a classification when the degree of the dual variety is three, see \cite[Thm.~5.2]{zak}. If $X$ is toric, we can write:

\begin{theorem}\label{thm:codegree}\cite[Thm.~5.2]{zak} Let $X \subset \P^N$ be  a smooth, irreducible, non-degenerate projective toric variety and $L=\cO_{\P^N}(1)|_X$. Assume moreover that the embedding is linearly normal. If $ \deg(X^\vee)\leq 3$ then $(X,L)$ is one of the following:
\begin{enumerate}
\item[(i)] $X=\P^n$, $n=1,2$,  $L=\cO(2)$,  $\deg(X^\vee)=2$ if $n=1$ and $\deg(X^\vee)=3$ if $n=3$,  see \rm{(\ref{eq:proj})}.
\item[(ii)] $X=\P^{n-m} \times \P^m$, $n-m \geq m$, $L=D_0+D_{n+1}$ where 
$$\begin{cases}
n=2, m=1, \quad \deg(X^\vee)=2,\hbox{ see \rm{(\ref{eq:products})}}\\
n=4, m=2, \quad \deg(X^\vee)=3, \hbox{ see \rm{(\ref{eq:products})}} \\
n=3, m=1, \quad \deg(X^\vee)=3, \hbox{ see \rm{(\ref{eq:segre})}}
\end{cases}$$
\item[(iii)] $X=\P_{\P^1}(\cO \oplus\cO(1))$, $L=D_0+D_3$, $\deg(X^\vee)=3$, see \rm{(\ref{eq:blowup})}. 
\end{enumerate}
\end{theorem}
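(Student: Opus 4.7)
The plan is to extract the toric cases from Zak's classification \cite[Thm.~5.2]{zak} of smooth, non-degenerate, linearly normal projective varieties $X \subset \P^N$ with $\deg(X^\vee)\leq 3$, and then read off the degree in each case from the Chern-class formulas already collected in Section~\ref{sec:examples}. Zak's list (without any toric hypothesis) consists of smooth hyperquadrics (dual degree $2$), the Veronese surface $v_2(\P^2)\subset\P^5$ (dual degree $3$), the Segre varieties $\P^1\times\P^2\subset\P^5$ and $\P^2\times\P^2\subset\P^8$ (dual degree $3$), and the Hirzebruch scroll $\P(\cO\oplus\cO(1))\hookrightarrow\P^4$ embedded by its tautological bundle (dual degree $3$).

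Next I would sift this list for toric examples. A smooth quadric hypersurface has Picard number one, and is isomorphic to a toric variety only in dimensions $1$ and $2$: the conic $v_2(\P^1)\subset\P^2$, i.e.\ $(\P^1,\cO(2))$, and the smooth quadric surface $\P^1\times\P^1\subset\P^3$, i.e.\ the Segre embedding with $L=D_0+D_3$. Every smooth hyperquadric of dimension at least $3$ has Picard number one and is not a projective space, so is not toric. All the other items on Zak's list are manifestly toric: the Veronese $(\P^2,\cO(2))$, the two Segre varieties $(\P^1\times\P^2,D_0+D_4)$ and $(\P^2\times\P^2,D_0+D_5)$, and the Hirzebruch scroll $\P_{\P^1}(\cO\oplus\cO(1))$ with $L=D_0+D_3$.

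It remains to confirm the degree of the dual in each case. When the dual is a hypersurface, $\deg(X^\vee)=c_n(J_1(L))$; when the defect is positive (this happens only for the Segre $\P^1\times\P^2$ on our list, whose dual is the reciprocal Segre $\P^2\times\P^1\subset\P^5$ of codimension $2$), one uses Remark~\ref{rem:degdual} to obtain $\deg(X^\vee)=c_{n-k}(J_1(L))\,L^{k}$. Equation~(\ref{eq:proj}) with $m=2$ yields $\deg(X^\vee)=2$ for $(\P^1,\cO(2))$ and $3$ for $(\P^2,\cO(2))$; equation~(\ref{eq:products}) with $m=1,2$ yields $m+1$ for the square Segre embeddings $(\P^m\times\P^m,D_0+D_{2m+1})$; equation~(\ref{eq:segre}) gives $c_2(J_1(L))\cdot L=3$ for $(\P^1\times\P^2,D_0+D_4)$; and equation~(\ref{eq:blowup}) gives $3$ for the Hirzebruch scroll.

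The only real obstacle is the appeal to Zak's classification, which is the engine of the argument. Granted that input, what remains is bookkeeping: the toric-versus-non-toric dichotomy is forced in the quadric case by the Picard-number constraint, and each degree is a direct substitution into a formula that has already been derived in Section~\ref{sec:examples}. The one place that requires care is the non-hypersurface case $\P^1\times\P^2$, where one must remember that $c_n(J_1(L))$ vanishes (since the defect is positive) and that the correct invariant of the dual is the intersection number from Remark~\ref{rem:degdual}.
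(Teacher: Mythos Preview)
Your proposal is correct and follows exactly the approach the paper takes: the paper does not give an independent proof but simply cites Zak's classification \cite[Thm.~5.2]{zak} and points to the formulas (\ref{eq:proj}), (\ref{eq:products}), (\ref{eq:segre}), (\ref{eq:blowup}) in Section~\ref{sec:examples} to read off the degrees. Your write-up makes explicit the one step the paper leaves implicit, namely that among smooth hyperquadrics only the conic and the quadric surface are toric (by the Picard-number constraint), and you correctly handle the defective case $\P^1\times\P^2$ via Remark~\ref{rem:degdual}. One small terminological slip: the embedding of $\P_{\P^1}(\cO\oplus\cO(1))$ into $\P^4$ with $L=D_0+D_3$ is not literally by the tautological bundle but by its twist by $\pi^*\cO_{\P^1}(1)$; this does not affect your argument.
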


The next case to consider is then $\deg(X^\vee)=4$. For this purpose one can read the computations of the examples in Section \ref{sec:examples} in terms of the degree of the dual variety. Starting with the case $X=\P^n$, see (\ref{eq:proj}), the only possibility of having degree $4$ is $n=4$, $m=2$.

By inspection of formulae (\ref{eq:rankoneproducts}) and (\ref{eq:rankone}) one gets the following:

\begin{proposition}\label{prop:rankone}
Consider $X=\P_{\P^{n-1}}(\cO\oplus\cO(r_1))$ as in Example \ref{ex:projectivebundles}, $L=aD_0+bD_3$ a very ample line bundle on $X$ defining a linearly normal embedding $X \subset |L|^\vee=\P^N$. Let $d$ be the degree of $X^\vee\subset \P^{N\vee}$.  Then $d=4$ if and only if one of the following holds:
\begin{enumerate}
\item[(i)] $n=2$, $r_1=2$, $(a,b)=(1,1)$; 
\item[(ii)] $n=2$, $r_1=0$, $(a,b)=(2,1)$ or $(1,2)$; 
\item[(iii)] $n=3$, $r_1=1$, $(a,b)=(1,1)$; 
\item[(iv)] $n=4$, $r_1=0$, $(a,b)=(1,1)$. 
\end{enumerate}
\end{proposition}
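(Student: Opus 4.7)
The plan is to combine the explicit polynomial formulae \eqref{eq:rankoneproducts} and \eqref{eq:rankone} for $c_n(J_1(L))$ with the monotonicity statement of Corollary \ref{cor:mindeg} and, when the dual has positive defect, with Remark \ref{rem:degdual}. The analysis splits naturally according to whether $r_1>0$ or $r_1=0$.

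For $r_1>0$ I intend to observe that the nef cone is generated by $D_0$ and $D_{n+1}$ and every ample integer class is $D_0+D_{n+1}+D$ with $D$ nef, so that Corollary \ref{cor:mindeg} forces $c_n(J_1(L))$ to attain its minimum on $\Amp(X)_{\Z}$ at $(a,b)=(1,1)$, where \eqref{eq:rankone} specialises to $r_1^{n-2}(r_1+n)$. A finite search over positive integers then shows $r_1^{n-2}(r_1+n)\leq 4$ only for $(n,r_1)\in\{(2,1),(2,2),(3,1)\}$; the latter two give the value $4$ exactly, producing cases $(i)$ and $(iii)$. To rule out further $(a,b)$ I will substitute $(2,1)$ and $(1,2)$ into \eqref{eq:rankone} in each of these three situations, observing that the values strictly exceed $4$ (in particular $5$ and $12$ for $(n,r_1)=(2,1)$), and then invoke Proposition \ref{prop:crece} to conclude that $c_n(J_1(L))$ only grows as $(a,b)$ moves further from $(1,1)$.

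For $r_1=0$ one has $X=\mathbb{P}^{n-1}\times\mathbb{P}^1$, and I will split on whether $a=1$ or $a\geq 2$. When $a=1$, $|L|$ embeds $X$ as a scroll in $\mathbb{P}^{n-1}$'s; for $n=2$ formula \eqref{eq:rankoneproducts} gives $c_2(J_1(L))=2b$, so the dual is a hypersurface of degree $4$ iff $b=2$, producing half of case $(ii)$. For $n\geq 3$ the dual has positive codimension and $c_n(J_1(L))=0$ is uninformative, so I will invoke Remark \ref{rem:degdual} to compute $\deg(X^\vee)=c_2(J_1(L))L^{n-2}=bn$, which equals $4$ exactly for $(n,b)=(4,1)$ and yields case $(iv)$. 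When $a\geq 2$, formula \eqref{eq:rankoneproducts} gives the lower bound $c_n(J_1(L))\geq 2n(n-1)(a-1)^{n-2}$, and a finite check will force $(n,a,b)=(2,2,1)$ as the only solution equal to $4$, completing case $(ii)$; the $\mathbb{P}^1\times\mathbb{P}^1$ symmetry then identifies the two instances $(a,b)=(2,1)$ and $(1,2)$ listed.

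The main obstacle I anticipate is the scroll branch $r_1=0,\,a=1,\,n\geq 3$: the top Chern class vanishes identically so the degree of the dual cannot be read off from $c_n(J_1(L))$ and one is obliged to compute the sub-top intersection $c_2(J_1(L))L^{n-2}$ via Remark \ref{rem:degdual}. Elsewhere the argument is a finite inspection of the displayed polynomial expressions.
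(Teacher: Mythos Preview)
Your proposal is correct and matches the paper's approach: the paper's entire proof is the single sentence ``By inspection of formulae (\ref{eq:rankoneproducts}) and (\ref{eq:rankone}),'' with the relevant defective-scroll computation $c_2(J_1(L))L^{n-2}=bn$ already recorded in the rank-one discussion of Example~\ref{ex:projectivebundles}. You have simply spelled out that inspection in detail, including the use of Corollary~\ref{cor:mindeg} to reduce to $(a,b)=(1,1)$ and the appeal to Remark~\ref{rem:degdual} in the positive-defect branch, exactly as the surrounding text of the paper does.
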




By inspection of formulae (\ref{eq:ranktodim3}) and (\ref{eq:ranktodim3def}), we have:

\begin{proposition}\label{prop:rank2}
Consider $X=\P_{\P^1}(\cO \oplus \cO(r_1) \oplus \cO(r_2))$  with $0 \leq r_1 \leq r_2$, and let $L$ be a very ample line bundle on $X$ defining a linearly normal embedding $X \subset \P^N$. With the notations of the previous proposition, it holds that $d=4$ if and only if $r_1=0$, $r_2=1$ and $a=b=1$.

\end{proposition}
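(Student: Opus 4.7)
The plan is to split the analysis according to whether $X\subset |L|^\vee$ has hypersurface dual or not. By Example \ref{ex:projectivebundles} (applied with $n=3$, $m=2$) every nef line bundle on $X$ has the form $L=aD_0+bD_4$ with $a,b\geq 0$, and $L$ is very ample precisely when $a\geq 1$ and $b\geq 1$. I would treat the two subranges $b=1$ and $b\geq 2$ separately, reading off the degree of the dual variety from the formulae (\ref{eq:ranktodim3}) and (\ref{eq:ranktodim3def}) together with Remark \ref{rem:degdual}.

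First, in the case $b=1$ with $a\geq 1$, the morphism $\phi_L$ embeds $X$ as a scroll in planes over $\P^1$, so by Remark \ref{ex:scrolls} the defect satisfies $k\geq 1$. Since $c_3(J_1(L))=0$ by (\ref{eq:ranktodim3}) while $c_2(J_1(L))\cdot L = 3a+r_1+r_2>0$ by (\ref{eq:ranktodim3def}), the defect is in fact equal to $1$, and Remark \ref{rem:degdual} identifies this intersection number with $\deg(X^\vee)$. Imposing $3a+r_1+r_2=4$ under the constraints $a\geq 1$ and $0\leq r_1\leq r_2$ forces $a=1$, $r_1=0$, $r_2=1$, yielding exactly the case in the statement.

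Next, in the case $b\geq 2$ (with $a\geq 1$), the discussion following (\ref{eq:ranktodim3def}) tells us that $L$ is very ample and $c_3(J_1(L))>0$, so $X^\vee$ is a hypersurface of degree $c_3(J_1(L))$. Reading (\ref{eq:ranktodim3}),
\begin{equation*}
c_3(J_1(L)) = 2(b-1)\bigl[\,2(r_1+r_2)\,b\,(b-\tfrac{1}{2}) + 3(a-1)(b-1) + 3ab\,\bigr],
\end{equation*}
and using $b-1\geq 1$ together with the non-negativity of each summand inside the bracket and the bound $3ab\geq 6$ (coming from $a\geq 1$, $b\geq 2$), one gets $c_3(J_1(L))\geq 2\cdot 1\cdot 6 = 12>4$, ruling out any solution in this range.

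The only subtle point I anticipate is the identification $\deg(X^\vee)=c_2(J_1(L))\cdot L$ in the scroll case, which requires pinning down the defect to be exactly $1$; this follows immediately from the combination of $c_3(J_1(L))=0$ and $c_2(J_1(L))\cdot L>0$, as noted above. The rest of the argument reduces to elementary non-negativity bounds on explicit polynomial expressions in $a,b,r_1,r_2$.
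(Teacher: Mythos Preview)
Your proposal is correct and follows essentially the same approach as the paper, which simply says ``by inspection of formulae (\ref{eq:ranktodim3}) and (\ref{eq:ranktodim3def})''; you have carefully spelled out that inspection by separating $b=1$ from $b\geq 2$ and bounding the explicit polynomials. Your extra justification that the defect equals exactly $1$ when $b=1$ (via $c_2(J_1(L))\cdot L>0$) is a nice touch, though the paper already asserts in the discussion preceding (\ref{eq:ranktodim3def}) that $c_2(J_1(L))\cdot L$ computes $\deg(X^\vee)$ in that case, so you could also simply quote that.
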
 
By inspection of formulae (\ref{eq:projhirz}) and  (\ref{eq:projhirz2}), we get:

\begin{proposition}\label{prop:hirz}
Let $X=\P_{\P^1}(\cO \oplus \cO(r))$ be a Hirzebruch surface and let $Y$ be a decomposable line bundle on $X$ as in Example \ref{ex:lbhirz}. Let $L$ be a very ample line bundle on $Y$ defining a linearly normal embedding $Y \subset \P^N$. Then, with the notation above, $d=4$ if and only if $r=s=t=0$, $a=b=1$. 
%
\end{proposition}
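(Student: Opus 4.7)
The plan is to deduce the statement from an evaluation of the explicit polynomials (\ref{eq:projhirz}) and (\ref{eq:projhirz2}) at the minimum point of the ample cone of $Y$. First I would use that on the smooth projective toric variety $Y$ ampleness is equivalent to very ampleness, so the hypothesis that $L$ be very ample translates into $a,b,c \geq 1$ in both parametrizations $L = aD_0 + bD_3 + cD_5$ (case $Y_0$) and $L = aD_0 + bD_3 + c(sD_0 + D_5)$ (case $Y_1$). In each case, the sum of the three nef-cone generators is ample and every ample divisor is obtained from it by adding a nef divisor, so Corollary \ref{cor:mindeg} applies and guarantees that the minimum of $c_3(J_1(L))$ over the lattice points of the ample cone is attained at $a = b = c = 1$.

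Second, I would substitute $a = b = c = 1$ into (\ref{eq:projhirz}) and (\ref{eq:projhirz2}) and analyse the resulting polynomials $P(r,s,t)$. The goal is to verify that in case $Y_0$ the polynomial takes the form $4 + Q(r,s,t)$ with $Q$ a polynomial in $r,s,t \geq 0$ having only non-negative coefficients that vanishes precisely at $r = s = t = 0$; and that in case $Y_1$, because of the constraint $s, t \geq 1$, the substitution already forces $P(r,s,t) > 4$. The value $4$ attained at $r = s = t = 0$ in case $Y_0$ corresponds to $Y = \P^1 \times \P^1 \times \P^1$ with the Segre polarization, whose dual is the Cayley hyperdeterminant of format $2 \times 2 \times 2$, of degree four.

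Third, I would rule out the possibility that some lattice point $(a,b,c) \ne (1,1,1)$ of the ample cone yields $d = 4$ when $r = s = t = 0$. In that regime the formula collapses to
\[
c_3(J_1(L)) = 24abc - 12(ab + ac + bc) + 8(a+b+c) - 8,
\]
which is nondecreasing in each of $a,b,c$ by Proposition \ref{prop:crece}. Evaluation at the three immediate neighbours $(2,1,1)$, $(1,2,1)$, $(1,1,2)$ gives a value strictly larger than $4$, and monotonicity then rules out every lattice point other than $(1,1,1)$.

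The main obstacle is the non-negativity claim for the polynomial $Q(r,s,t)$ in case $Y_0$: one has to gather the twenty-odd monomial terms of (\ref{eq:projhirz}) after the substitution $a = b = c = 1$ and verify that no cancellation produces a negative coefficient, and similarly for case $Y_1$. With the explicit formulae in hand this is a systematic bookkeeping exercise rather than a conceptual difficulty, but it is the only step that is not entirely mechanical.
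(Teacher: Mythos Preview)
Your proposal is correct and follows essentially the same route as the paper, whose proof is the single sentence ``By inspection of formulae (\ref{eq:projhirz}) and (\ref{eq:projhirz2})''. You are simply fleshing out that inspection, using exactly the tools the paper sets up (Corollary \ref{cor:mindeg} for the location of the minimum, Proposition \ref{prop:crece} for monotonicity). The substitution $a=b=c=1$ in (\ref{eq:projhirz}) indeed yields $rt^2+5rt+2st+4r+4s+4t+4$, which is the polynomial the paper records in the proof inside Example \ref{ex:lbhirz}, so your claimed decomposition $4+Q$ with $Q$ having non-negative coefficients is immediate; the $Y_1$ case gives $rt^2+5rt+4st+4r+4s+4t+4\geq 16$ once $s,t\geq 1$.

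One small point you should make explicit: the identification $d=c_3(J_1(L))$ presupposes that the dual variety is a hypersurface. Since your computation shows the minimum of $c_3(J_1(L))$ over the ample cone is at least $4>0$ in every case, Corollary \ref{cor:mindeg2}(i) (or directly Lemma \ref{lem:chern}) guarantees this, so the identification is legitimate throughout. With that remark added, your argument is complete and matches the paper's intended proof. (Note also that the statement as printed says $a=b=1$; your argument correctly produces $a=b=c=1$, and the omission of $c$ appears to be a typo in the paper.)
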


We have also computed the minimal degree $d$ of the dual variety for all of the smooth Fano toric 3-folds, classified in \cite{bat}, \cite{WW} (see Figure \ref{fig:fano3}), obtaining the following results for low degrees:

\begin{proposition}\label{prop:fano} Let $X$ be a smooth toric Fano 3-fold and let $L$ be a very ample line bundle on $X$ defining an embedding $X \subset \P^n$. Then, either $d=3$ and $X$ is the case recalled in (iii) of Proposition \ref{prop:rankone}, or $d=4$ and $X$ is that of Proposition \ref{prop:rank2}, or $X=\P^1\times\P^1\times\P^1$ and $L=\pi_1^*(\cO(1))+\pi_2^*(\cO(1))+\pi_3^*(\cO(1))$, where $\pi_i$, $i=1,2,3$ is the projection onto the $i$-factor. For the rest of the cases $d>4$. 
\end{proposition}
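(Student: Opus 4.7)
The plan is to reduce the question to a finite computation by combining the classification of smooth toric Fano $3$-folds of Batyrev and Watanabe--Watanabe (which gives $18$ isomorphism classes) with the finite search space produced by Proposition \ref{prop:mindeg}. For each such $X$, the minimum of $jc_3$ over the integer ample cone is attained in the bounded region
\[
A=\{\sum_{i=1}^m \lambda_i L_i:\ 0\leq \lambda_i\leq 1\}\cap \Amp(X)_{\Z},
\]
where $L_1,\dots,L_m$ are the minimal generators of $\Nef(X)$. Since the minimal generators of the nef cones of Fano toric $3$-folds are explicitly known from the fan data, $A$ is an effectively computable finite list of candidate very ample line bundles in each case.

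The bulk of the cases are already handled by results proved earlier in the paper. More precisely, I would first peel off those Fano $3$-folds that fall into families already analyzed: $\P^3$ is covered by Example \ref{ex:projective}; the decomposable rank one projective bundles over $\P^2$ (that is, $X=\P_{\P^2}(\cO\oplus\cO(r_1))$ for $r_1=0,1$) are covered by Proposition \ref{prop:rankone} with $n=3$; the decomposable rank two projective bundles over $\P^1$ (with $0\leq r_1\leq r_2$ Fano) are covered by Proposition \ref{prop:rank2}; and the rank one projective bundles over Hirzebruch surfaces giving Fano $3$-folds are covered by Proposition \ref{prop:hirz}. Inspecting the tables in those propositions shows exactly which of these give $d=3$ or $d=4$, accounting for the two bulleted cases in the statement, and in every other pair $(X,L)$ within these families one already has $d>4$.

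The remaining Fano $3$-folds are then a short list that does not fit any of the families above: the triple product $\P^1\times\P^1\times\P^1$, the flag variety, and a few equivariant blow-ups (for instance blow-ups of $\P^3$ or of projective bundles along torus-invariant subvarieties). For each of these, the procedure is identical: read off the rays of the fan $\Sigma_X$, use the generalized Euler sequence \eqref{eq:euler} to write $c(\Omega_X)$ via \eqref{eq:chernomega}, substitute into \eqref{eq:formula} to obtain $c_3(J_1(L))$ as an explicit polynomial in the coordinates of $L$ in a basis of $\Pic(X)$, and then evaluate this polynomial on each $L\in A$ that is very ample. For $\P^1\times\P^1\times\P^1$ this computation yields $c_3(J_1(L))=4$ precisely at the tautological Segre polarization $L=\pi_1^*\cO(1)+\pi_2^*\cO(1)+\pi_3^*\cO(1)$ (and $c_3(J_1(L))>4$ for every other very ample $L$), giving the third case of the statement.

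The main obstacle is really just one of bookkeeping: ensuring that the enumeration of the $18$ Fano classes is complete, that the nef-cone generators used to define $A$ are correct for each fan, and that the minimum over $A$ is indeed computed at every very ample lattice point (not just at the cube vertices), since Corollary \ref{cor:mindeg} applies only when a single ample divisor dominates the others modulo nef. For $3$-folds with Picard rank $\leq 2$ this domination holds and the minimum is attained at the sum of the nef generators; for higher Picard rank, $A$ may have several ample lattice points and each must be checked. This is precisely the finite case-by-case verification carried out with \emph{Macaulay2}, and Figure \ref{fig:fano3} records the resulting minimal degrees; the statement of the proposition extracts the low-degree entries $d\leq 4$ and certifies that the remaining $14$ or so Fano classes produce $d>4$ on every admissible $L\in A$.
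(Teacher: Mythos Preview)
Your overall strategy---reduce via Proposition~\ref{prop:mindeg} to the finite set $A$ and then compute---is exactly the paper's, but the organization differs. The paper does not delegate to Propositions~\ref{prop:rankone}, \ref{prop:rank2}, \ref{prop:hirz}; it runs through the $18$ Fano $3$-folds directly, exploiting a fact you underuse: for \emph{every} one of them with $\rho\leq 4$ the nef cone is simplicial with primitive generators forming a $\Z$-basis of $\Pic(X)$, so Corollary~\ref{cor:mindeg} (not just Proposition~\ref{prop:mindeg}) applies and the minimum of $jc_3$ over $\Amp(X)_{\Z}$ is attained at the single point $L_1+\cdots+L_\rho$. Only the two $\rho=5$ varieties, $DS_6\times\P^1$ and $F_1^5$, have non-simplicial nef cones and require scanning a genuine region $A$; the paper does this explicitly and finds the minima at interior lattice points.

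Two points deserve care. First, at the minimal ample $L$ the value $c_3(J_1(L))$ is \emph{zero} for $\P^2\times\P^1$ and for $\P_{\P^1}(\cO\oplus\cO\oplus\cO(1))$: these are embedded as scrolls in planes with positive defect, and the paper computes $d=c_2(J_1(L))\cdot L$ via Remark~\ref{rem:degdual}, obtaining $d=3b$ and $d=3b+1$ respectively. Your delegation to the earlier propositions happens to cover these two, but your stated recipe for the residual cases (``evaluate $c_3(J_1(L))$ on each $L\in A$'') would silently return $0$ rather than the degree if a defective case arose there; you should say why none does. Second, your list of ``remaining'' cases is off: there is no flag variety among the smooth toric Fano $3$-folds, and $\P^1\times\P^1\times\P^1$ is already a decomposable $\P^1$-bundle over the Hirzebruch surface $\P^1\times\P^1$, hence already inside the scope of Proposition~\ref{prop:hirz} (which indeed singles out $r=s=t=0$ as its unique $d=4$ instance). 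The varieties genuinely outside your delegated families are rather some of the $F_i^3$, $F_i^4$ and the two $\rho=5$ cases.
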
 

\begin{proof} We will follow the notation for the smooth toric Fano 3-folds in \cite[Thm.~1]{WW}. If the Picard rank $\rho$ is less or equal to 4, for a suitable basis of the Picard group the nef cone is generated by the standard $\Z$-basis $e_1\ldots,e_\rho$. Then, by Proposition \ref{prop:mindeg}, the lowest degree is obtained for the value of $c_3(J_1(L))$ at $(1,\ldots,1)$. In all cases except for $X=\P^2\times\P^1$ and $X=\P_{\P^1}(\cO\oplus\cO\oplus\cO(1))$ this value is greater than zero, representing the minimal degrees shown in Figure \ref{fig:fano3}. In the case $X=\P^2\times\P^1$ with rays 
\[
\Sigma_X=\{(1,0,0),(0,1,0),(-1,-1,0),(0,0,1),(0,0,-1)\},
\]
the Nef cone is generated by $D_0$ and $D_3$. Then the value of $c_3(J_1(L))$ for $L=aD_0+bD_3$ is in fact zero when $a=1$ so in this case the degree is $d=c_2(J_1(L))L=3b$, obtaining the lowest value 3 when $L=D_0+D_3$. Similarly, for the case $X=\P_{\P^1}(\cO\oplus\cO\oplus\cO(1))$ with rays 
\[
\Sigma_X=\{(1,0,0),(0,1,0),(-1,-1,0),(0,0,1),(0,1,-1)\},
\]
the nef cone is generated by $D_0$ and $D_3$. The value of $c_3(J_1(L))$ for $L=aD_0+bD_3$ is again zero when $a=1$ so the degree in this cases is $d=c_2(J_1(L))L=3b+1$. This time the lowest value is 4 when $L=D_0+D_3$.  

Let us discuss in more detail the two smooth toric Fano 3-folds of Picard rank $\rho=5$, since their nef cones are not simplicial. The first case is $X\cong DS_6\times\P^1$ where $DS_6$ denotes the blow-up of $\P^2$ in 3 points. It is defined by the rays 
\[
\Sigma_X=\{(1,0,0),(1,0,1),(0,0,1),(-1,0,0),(-1,0,-1),(0,0,-1),(0,1,0),(0,-1,0)\}
\]
Taking $\Pic X=<D_0,D_1,D_2,D_3,D_7>$, the nef cone in these coordinates is generated by $v_1 = (0,0,1,1,0)$, $v_2 = (0,1,1,0,0)$, $v_3 = (1,1,1,0,0)$, $v_4 = (0,0,0,0,1)$, $v_5 = (1,1,0,0,0)$, $v_6 = (0,1,1,1,0)$. By using Proposition \ref{prop:mindeg} and applying the formula of the top Chern class for every point in the set $A$, the lowest degree is 24 and it is attained at $v=(1,2,2,1,1)$. This vector is interior to the nef cone, thus ample, and corresponds to the divisor $L=D_0+2D_1+2D_2+D_3+D_7$. 

In the second case, $X\cong F_1^5$ is generated by the rays 
\[
\Sigma_X=\{(1,0,0),(1,0,1),(0,0,1),(-1,0,0),(-1,0,-1),(0,0,-1),(0,1,0),(1,-1,0)\}
\]
Taking $\Pic = <D_0,D_1,D_2,D_3,D_7>$, the nef cone in these coordinates is generated by $v_1 = (0,0,1,1,0)$, $v_2 = (0,1,1,0,0)$, $v_3 = (1,1,1,0,1)$, $v_4 = (0,0,0,0,1)$, $v_5 = (1,1,0,0,1)$, $v_6 = (0,1,1,1,0)$. Then, using again Proposition \ref{prop:mindeg} and applying the top Chern class formula, the point of the set $A$ which gives this time the lowest degree is $v=(1,2,2,1,2)$. As in the previous case, this vector corresponds to an ample divisor, namely $L=D_0+2D_1+2D_2+D_3+2D_7$, which gives the lowest degree $c_3(J_1(L))=72$.
\end{proof}

We conclude by showing in Figure \ref{fig:fano3} the miminal degrees $d$ of the discriminants of the 18 smooth toric Fano 3-fods for any possible ample line bundle $L$. We recall that $DS_8$, $DS_7$ and $DS_6$ are the del Pezzo surfaces corresponding to the blow-up of 1, 2 and 3 points of $\P^2$ respectively. For completeness, we also show in Figure \ref{fig:fano2} the 5 smooth toric Fano surfaces, where only $DS_6$ has a non-simplicial nef cone.

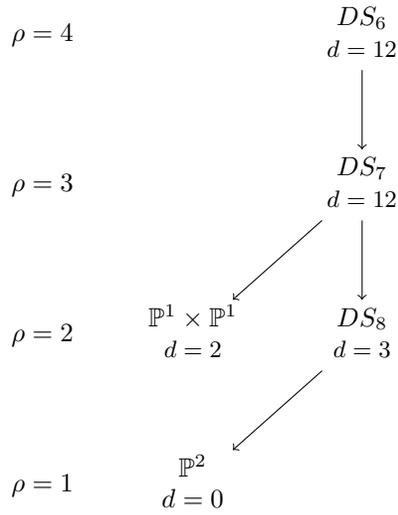
\begin{figure}
\begin{tikzpicture} 
\node at (0,0) {$\rho=1$};\node at (0,2) {$\rho=2$};\node at (0,4) {$\rho=3$};\node at (0,6) {$\rho=4$};
\node (b) at (2,0) [inner sep=1.25pt] {$\begin{array}{c}\P^2\\d=0\end{array}$};
\node (f) at (2,2) [inner sep=1.25pt] {$\begin{array}{c}\P^1\times\P^1\\\text{\small $d=2$}\end{array}$};
\node (g) at (4.25,2) [inner sep=1.25pt] {$\begin{array}{c}DS_8\\\text{\small $d=3$}\end{array}$};
\node (m) at (4.25,4) [inner sep=1.25pt] {$\begin{array}{c}DS_7\\\text{\small $d=12$}\end{array}$};
\node (q) at (4.25,6) [inner sep=1.25pt] {$\begin{array}{c}DS_6\\\text{\small $d=12$}\end{array}$};
\draw[->] (g) -- (b);
\draw[->] (m) -- (f);\draw[->] (m) -- (g);
\draw[->] (q) -- (m);
\end{tikzpicture} 
\caption{The minimal degrees $d$ for the discriminants of the 4 smooth smooth toric Fano surfaces for $L$ ample and their birational relations.}
\label{fig:fano2}
\end{figure}

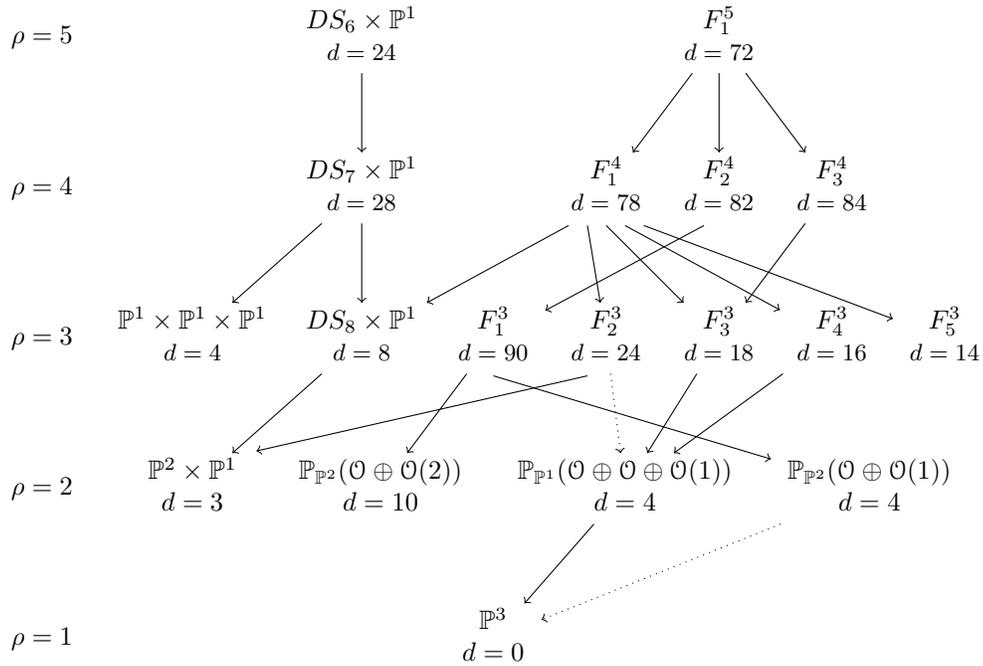
\begin{figure}
\begin{tikzpicture} 
\node at (0,-0.5) {$\rho=1$};\node at (0,1.5) {$\rho=2$};\node at (0,3.5) {$\rho=3$};\node at (0,5.5) {$\rho=4$};\node at (0,7.5) {$\rho=5$};
\node (a) at (6,-0.5) [inner sep=1.25pt] {$\begin{array}{c}\P^3\\d=0\end{array}$};
\node (b) at (2,1.5) [inner sep=1.25pt] {$\begin{array}{c}\P^2\times\P^1\\d=3\end{array}$};
\node (c) at (4.5,1.5) [inner sep=1.25pt] {$\begin{array}{c}\P_{\P^2}(\cO\oplus\cO(2))\\d=10\end{array}$};
\node (d) at (7.75,1.5) [inner sep=1.25pt] {$\begin{array}{c}\P_{\P^1}(\cO\oplus\cO\oplus\cO(1))\\d=4\end{array}$};
\node (e) at (11,1.5) [inner sep=1.25pt] {$\begin{array}{c}\P_{\P^2}(\cO\oplus\cO(1))\\d=4\end{array}$};

\node (f) at (2,3.5) [inner sep=1.25pt] {$\begin{array}{c}\P^1\times\P^1\times\P^1\\\text{\small $d=4$}\end{array}$};
\node (g) at (4.25,3.5) [inner sep=1.25pt] {$\begin{array}{c}DS_8\times\P^1\\\text{\small $d=8$}\end{array}$};
\node (h) at (6,3.5) [inner sep=1.25pt] {$\begin{array}{c}F^3_1\\\text{\small $d=90$}\end{array}$};
\node (i) at (7.5,3.5) [inner sep=1.25pt] {$\begin{array}{c}F^3_2\\\text{\small $d=24$}\end{array}$};
\node (j) at (9,3.5) [inner sep=1.25pt] {$\begin{array}{c}F^3_3\\\text{\small $d=18$}\end{array}$};
\node (k) at (10.5,3.5) [inner sep=1.25pt] {$\begin{array}{c}F^3_4\\\text{\small $d=16$}\end{array}$};
\node (l) at (12,3.5) [inner sep=1.25pt] {$\begin{array}{c}F^3_5\\\text{\small $d=14$}\end{array}$};

\node (m) at (4.25,5.5) [inner sep=1.25pt] {$\begin{array}{c}DS_7\times\P^1\\\text{\small $d=28$}\end{array}$};
\node (n) at (7.5,5.5) [inner sep=1.25pt] {$\begin{array}{c}F^4_1\\\text{\small $d=78$}\end{array}$};
\node (o) at (9,5.5) [inner sep=1.25pt] {$\begin{array}{c}F^4_2\\\text{\small $d=82$}\end{array}$};
\node (p) at (10.5,5.5) [inner sep=1.25pt] {$\begin{array}{c}F^4_3\\\text{\small $d=84$}\end{array}$};

\node (q) at (4.25,7.5) [inner sep=1.25pt] {$\begin{array}{c}DS_6\times\P^1\\\text{\small $d=24$}\end{array}$};
\node (r) at (9,7.5) [inner sep=1.25pt] {$\begin{array}{c}F^5_1\\\text{\small $d=72$}\end{array}$};

\draw[->] (d) -- (a);\draw[->,dotted] (e) -- (a);
\draw[->] (g) -- (b);
\draw[->] (h) -- (c);\draw[->] (6,3) -- (e);
\draw[->,dotted] (i) -- (d);
\draw[->] (7.25,3) -- (2.85,2);\draw[->] (j) -- (d);\draw[->] (k) -- (d);
\draw[->] (m) -- (f);\draw[->] (m) -- (g);
\draw[->] (7,5) -- (g);\draw[->] (7.25,5) -- (i);\draw[->] (7.5,5) -- (j);\draw[->] (7.75,5) -- (k);\draw[->] (8,5) -- (l);
\draw[->] (8.8,5) -- (h);\draw[->] (p) -- (j);
\draw[->] (q) -- (m);\draw[->] (r) -- (n);\draw[->] (r) -- (o);\draw[->] (r) -- (p);
\end{tikzpicture} 
\caption{The 18 smooth smooth toric Fano 3-folds with their minimal degrees $d$ of the discriminant for $L$ ample, and their relations following the notation in \cite{WW}. The arrow indicates blow-up with center a line and the dotted arrow indicates a blow-up with center a point.}
\label{fig:fano3}
\end{figure}


\bibliographystyle{plain}
\bibliography{biblio}

\end{document}